\newtheorem{teo*}{Theorem}
\newtheorem{teo}{Theorem}[section]
\newtheorem{pro}[teo]{Proposition}
\newtheorem{lem}[teo]{Lemma}
\theoremstyle{definition}
\newtheorem{defi}[teo]{Definition}
\newtheorem{exam}[teo]{Example}
\newtheorem{rem}[teo]{Remark}
\newtheorem{nota}[teo]{Notation}
\newcommand{\N}{\mathbb N}
\newcommand{\K}{\mathbb K}
\newcommand{\Der}{\operatorname{Der}}
\newcommand{\Hom}{\operatorname{Hom}}
\providecommand{\keywords}[1]{{\textbf{Keywords:}} #1}
\begin{document}

\title{High-order derivations of the Hasse-Schmidt algebra}
\author{Paul Barajas}
\maketitle

\begin{abstract}
In this paper we establish relations among the module of high-order derivations of the Hasse-Schmidt algebra and the module of high-order derivations of the base ring.
\end{abstract}

\noindent\keywords{Hasse-Schmidt derivation, Hasse-Schmidt algebra, high-order derivation}

\tableofcontents
\newpage


\section*{Introduction}

The notion of derivation and its generalizations have been very important objects of study within commutative algebra and, in turn, powerful tools in areas such as algebraic geometry and differential geometry. Since there are different notions of high-order derivations, it is natural to ask if there are relations among them (see \cite{N}, \cite{LN1}, \cite{LN2}, \cite{LN3}).

In a recent work, T. de Fernex and R. Docampo showed how derivations of the Hasse-Schmidt algebra and derivations of the base ring are related. The statement is the following:
\begin{teo*}\cite{dFDo1}\label{Der}
Let $n\in\N\cup\{\infty\}.$ Let $k$ be a ring, $A$ be a $k$-algebra and $HS^n_{A/k}$ be the Hasse-Schmidt algebra of $A$ over $k$ of order $n$. Let $M$ be a $HS_{A/k}^n$-module. There is a natural isomorphism of $HS_{A/k}^n$-modules
\[\Der_k(HS_{A/k}^n,M)\simeq\Der_k(A,M\otimes_{HS_{A/k}^n}HS_{A/k}^n[t]/\langle t^{n+1}\rangle).\]
\end{teo*}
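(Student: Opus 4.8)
The plan is to combine the universal property of the Hasse--Schmidt algebra with the classical description of derivations via square-zero extensions. Write $R=\HS^n_{A/k}$ and, for a $k$-algebra $B$, set $W(B)=B[t]/\langle t^{n+1}\rangle$ (interpreted as the $t$-adic completion when $n=\infty$). The defining property of the Hasse--Schmidt algebra is the adjunction
\[\Hom_{k\text{-alg}}(R,B)\cong\Hom_{k\text{-alg}}(A,W(B)),\]
natural in $B$: a $k$-algebra map $R\to B$ is the same datum as a length-$n$ Hasse--Schmidt derivation $A\to B$, i.e. a $k$-algebra map $A\to W(B)$. Under this bijection $\mathrm{id}_R$ corresponds to the universal Hasse--Schmidt derivation $u\colon A\to W(R)$. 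On the other side, for an $R$-module $M$ one has the standard identification
\[\Der_k(R,M)\cong\{\,s\in\Hom_{k\text{-alg}}(R,R\ltimes M):\pi\circ s=\mathrm{id}_R\,\},\]
where $R\ltimes M$ is the trivial square-zero extension, $\pi\colon R\ltimes M\to R$ its projection, and $D$ corresponds to $s=(\mathrm{id}_R,D)$.

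First I would apply the adjunction with $B=R\ltimes M$, giving $\Hom_{k\text{-alg}}(R,R\ltimes M)\cong\Hom_{k\text{-alg}}(A,W(R\ltimes M))$. The key computation is the canonical isomorphism
\[W(R\ltimes M)\cong W(R)\ltimes\bigl(M\otimes_R W(R)\bigr),\]
which holds because $R\ltimes M=R\oplus M$ as $R$-modules, so that applying $W=(-)[t]/\langle t^{n+1}\rangle$ splits off the square-zero ideal $M\otimes_R W(R)$ with quotient $W(R)$. Setting $N:=M\otimes_R W(R)$ (the truncated module of $M$, regarded as a $W(R)$-module), the crucial point is that this isomorphism carries $W(\pi)$ to the projection $\pi_N\colon W(R)\ltimes N\to W(R)$ of the trivial extension.

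Next I would transport the section condition through the adjunction. By naturality in $B$ applied to $\pi$, the equality $\pi\circ s=\mathrm{id}_R$ holds if and only if $W(\pi)\circ\tilde s=u$, where $\tilde s\colon A\to W(R)\ltimes N$ denotes the adjoint of $s$. Hence the sections $s$ correspond bijectively to the $k$-algebra lifts of the universal derivation $u$ along $\pi_N$, and by the standard lifting principle such lifts are precisely the maps $a\mapsto(u(a),\delta(a))$ with $\delta\in\Der_k(A,N)$, where $A$ acts on $N$ through $u$. Chaining these bijections yields $\Der_k(R,M)\cong\Der_k(A,M\otimes_R W(R))$, which is the claim. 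I expect the main obstacle to be the bookkeeping in this step: one must check that the section condition translates exactly into the lifting condition (not a weaker normalization) and that the $A$-module structure on $N$ induced by $u$ is the one for which the Leibniz identity for $\delta$ reproduces the derivation property.

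To finish I would verify that the composite bijection is natural in $M$ and $R$-linear, as required for an isomorphism of $R$-modules. Naturality in $M$ is inherited from each constituent step, since the adjunction, the tensor identity, and the lifting correspondence are all functorial in $M$. For $R$-linearity, note that $\Der_k(R,M)$ is an $R$-module through $M$, while $\Der_k(A,N)$ is an $R$-module through the inclusion $R\hookrightarrow W(R)$ acting on $N$; tracing a scalar $r\in R$ through the construction shows the two actions coincide. The whole argument is uniform in $n\in\N\cup\{\infty\}$, the only delicate point being the interpretation of $W$ as the $t$-adic completion in the case $n=\infty$.
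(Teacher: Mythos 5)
The paper never proves this statement itself --- it is quoted from \cite{dFDo1} (restated as Theorem \ref{Der1}), so the only available comparison is with the original argument of de Fernex and Docampo, and your route is essentially that argument: the adjunction $\Hom_{k\text{-alg}}(HS^n_{A/k},B)\cong\Hom_{k\text{-alg}}(A,B[t]/\langle t^{n+1}\rangle)$ defining the Hasse--Schmidt algebra, combined with the description of $\Der_k(R,M)$ as sections of the trivial square-zero extension $R\ltimes M\to R$ and of lifts along $W(R)\ltimes N\to W(R)$ as derivations $A\to N$. For finite $n$ every step checks out: $W(R)=R[t]/\langle t^{n+1}\rangle$ is free of rank $n+1$ over $R$, so $W(-)\cong(-)\otimes_R W(R)$ on $R$-modules and your key isomorphism $W(R\ltimes M)\cong W(R)\ltimes(M\otimes_R W(R))$ holds, the section condition transports correctly by naturality in $B$, and the resulting bijection is $R$-linear for the module structure the paper uses in Lemma \ref{An-A}.

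The one genuine gap is the case $n=\infty$, which you dismiss as a matter of ``interpretation.'' There $W(R)=R[[t]]$ and $W(R\ltimes M)=(R\oplus M)[[t]]=R[[t]]\ltimes M[[t]]$, but $M[[t]]$ is \emph{not} $M\otimes_R R[[t]]$ for a general $R$-module $M$ (the tensor product only captures power series whose coefficients lie in a finitely generated submodule). So your argument, run verbatim at $n=\infty$, yields $\Der_k(A_\infty,M)\cong\Der_k(A,M[[t]])$ rather than the stated $\Der_k(A,M\otimes_{A_\infty}B_\infty)$ with $B_\infty=A_\infty[[t]]$ as in Notation \ref{nota}. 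To recover the statement as written you must either reinterpret the tensor as a completed tensor product, impose finiteness on $M$, or pass to the finite-$n$ statements and take a limit --- which is precisely the extra work the paper itself does for its higher-order analogue in Proposition \ref{der infi}. You should either restrict your displayed isomorphism to finite $n$ and treat $n=\infty$ separately, or make explicit which reading of $M\otimes_{A_\infty}B_\infty$ you are using.
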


This result has important implications for jet schemes and arc spaces (see \cite{dFDo1}). It also has applications in the theory of Nash blowups (see \cite{dFDo2}). In addition, Theorem \ref{Der} implies an isomorphism on the dual objects, that is, the modules of K\"ahler differentials. This dual version has also been studied by Chiu and Narváez using the Hasse-Schmidt differential module as the main tool (see \cite{CN}).
 
In this work, we continue to explore relations between different generalizations of a derivation. H. Osborn and Y. Nakai introduced and studied high-order derivations (\cite{O,N}). The high-order derivation defined by Osborn and Nakai are related to the differential operators defined by Grothendieck in \cite{EGA}. Specifically, a high-order derivation is a differential operator with a zero constant term. Our main theorem relates the module of high-order derivations of $HS^n_{A/k}$ with the module of high-order derivations of $A$. This result can be seen as a higher-order version of Theorem \ref{Der}.

\begin{teo*}\label{mT*}
Let $n\in\N\cup\{\infty\}.$ Let $m\in\N_{\geq1}$. Let $A$ be a $k$-algebra and let $M$ be a $HS_{A/k}^n$-module.
\begin{enumerate}
\item[(1)] There is a natural homomorphism of $HS_{A/k}^n$-modules
$$\phi_n:\Der_k^m(HS_{A/k}^n,M)\to\Der_k^m(A,M\otimes_{HS_{A/k}^n}HS_{A/k}^n[t]/\langle t^{n+1}\rangle).$$ 
\item[(2)]  Let $\K$ be a field of characteristic zero. Assume that $A$ is a finitely generated $\K$-algebra. Then $\phi_n$ is surjective.
\item[(3)] In general, $\phi_n$ is not injective.
\end{enumerate}
\end{teo*}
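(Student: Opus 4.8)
The plan is to construct $\phi_n$ from the universal Hasse--Schmidt derivation and then analyse it through the modules representing high-order derivations. Throughout, write $B_n=HS_{A/k}^n$, let $N=M\otimes_{B_n}B_n[t]/\langle t^{n+1}\rangle$, and let $\gamma_n\colon A\to B_n[t]/\langle t^{n+1}\rangle$ be the universal Hasse--Schmidt derivation, i.e.\ the $k$-algebra homomorphism corresponding to $\mathrm{id}_{B_n}$ under the universal property of $B_n$. \textbf{Part (1).} Given $D\in\Der_k^m(B_n,M)$, extend it coefficientwise to $\widetilde{D}\colon B_n[t]/\langle t^{n+1}\rangle\to N$, $\widetilde{D}(\sum_i b_i t^i)=\sum_i D(b_i)t^i$, and set $\phi_n(D)=\widetilde{D}\circ\gamma_n$. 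To see this lands in $\Der_k^m(A,N)$ I would prove two elementary facts from the commutator characterisation of differential operators: (a) coefficientwise extension preserves order, since $[\widetilde{D},\sum_j c_j t^j]=\sum_j\widetilde{[D,c_j]}\,t^j$ lowers the order by one at each step; and (b) if $\rho\colon A\to C$ is a $k$-algebra map and $E\colon C\to N$ has order $\le m$, then $E\circ\rho$ has order $\le m$, because $[E\circ\rho,a]=[E,\rho(a)]\circ\rho$ for $a\in A$, so the $(m+1)$-fold commutator vanishes, while $(E\circ\rho)(1)=E(1)$ inherits the zero-constant-term condition. Applying (a) to $D$ and (b) to $\rho=\gamma_n$ gives $\phi_n(D)\in\Der_k^m(A,N)$. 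Naturality in $M$ and $B_n$-linearity are immediate, since the $B_n$-action is by multiplication on coefficients, which commutes with $\widetilde{(-)}$ and with precomposition by $\gamma_n$; for $m=1$ this recovers de Fernex--Docampo's isomorphism.

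\textbf{Part (2).} I would translate surjectivity into the representing modules via $\Der_\K^m(-,N)\cong\Hom(\Omega^{(m)}_{-/\K},N)$, prove it first for the polynomial ring $R:=\pol$, and then descend. For $R$ the algebra $HS_{R/\K}^n=\K[\underline{x}^{(0)},\ldots,\underline{x}^{(n)}]$ is again polynomial, so $\Omega^{(m)}_{R/\K}$ and $\Omega^{(m)}_{HS_{R/\K}^n/\K}$ are free and high-order derivations are free modules with bases $\partial^\alpha$ in the respective variables. Differentiating $\gamma_n(x_i)=\sum_j x_i^{(j)}t^j$ shows that $\phi_n(\partial_{x_i^{(j)}})$ is the derivation $x_{i'}\mapsto\delta_{ii'}t^j$, so the order-one correspondence identifies $\partial_{x_i^{(j)}}$ on $HS_{R/\K}^n$ with $\partial_{x_i}$ on $R$ placed in the $t^j$-slot of $N$. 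Since every basis operator $\partial^\alpha$ on $R$ is a composite of the $\partial_{x_i}$, I would lift it to a composite of the $\partial_{x_i^{(j)}}$, adjust the $t$-placement and the $N$-coefficients, and thereby hit an arbitrary $\delta=\sum_\alpha c_\alpha\partial^\alpha\in\Der_\K^m(R,N)$; the rank count below confirms there is ample room. For a general finitely generated $A=R/I$ I would use that $HS_{A/\K}^n$ is the quotient of $HS_{R/\K}^n$ by the prolongation ideal of $I$, that $\Der_\K^m(A,N)\hookrightarrow\Der_\K^m(R,N)$ by precomposition with $R\twoheadrightarrow A$, and the functoriality and right-exactness of the $\Omega^{(m)}$-sequences, to descend the lift. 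The characteristic-zero hypothesis enters exactly here, guaranteeing that the high-order derivations of the possibly singular quotient are controlled by those of the smooth ambient ring and descend compatibly along the prolongation quotient. I expect this descent --- reconciling the order-$m$ operators on $HS_{A/\K}^n$ with those on the smooth $HS_{R/\K}^n$ modulo the prolongation ideal --- to be the main obstacle, since, unlike $m=1$, there is no universal property of order-$m$ operators matching the Hasse--Schmidt algebra.

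\textbf{Part (3).} Non-injectivity is already visible for $A=\K[x]$, $n=1$, $m=2$ and $M=HS_{A/\K}^1=\K[x_0,x_1]$ (writing $x_0=x^{(0)}$, $x_1=x^{(1)}$), where $\gamma_1(x)=x_0+x_1t$. The source $\Der_\K^2(\K[x_0,x_1],\K[x_0,x_1])$ is free over $\K[x_0,x_1]$ of rank $\binom{4}{2}-1=5$, with basis $\partial_0,\partial_1,\partial_0^2,\partial_0\partial_1,\partial_1^2$. The target $\Der_\K^2(\K[x],N)$ with $N=\K[x_0,x_1][t]/\langle t^2\rangle$ satisfies $\Der_\K^2(\K[x],N)\cong N^{\oplus 2}$ as $HS_{A/\K}^1$-modules, since $\K[x]$ is a polynomial ring in one variable, so it is free over $\K[x_0,x_1]$ of rank $4$. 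As $\phi_1$ is surjective by part (2) and the target is free, hence projective, the sequence splits and $\ker\phi_1$ is stably free of rank $5-4=1$, thus free of rank $1$ and nonzero; in particular $\phi_1$ is not injective. Conceptually, the second-order cross operator $\partial_0\partial_1$ in distinct jet directions has no counterpart among the order-$\le 2$ operators in the single variable $x$ and is annihilated after composing with $\gamma_1$ and truncating modulo $t^2$. The same rank comparison shows the ranks agree ($2=2$) for $m=1$, consistent with the isomorphism there, and that injectivity must fail once $m\ge 2$.
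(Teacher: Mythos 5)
The central gap is in part (2), which carries the weight of the theorem. ``The rank count confirms there is ample room'' proves nothing: a module map between free modules of ranks $5$ and $4$ over $\K[x_0,x_1]$ need not be surjective, so ranks cannot give surjectivity. The substantive step --- producing, for a given $E=\sum_j E_j\otimes t^j$, an actual $D$ on $HS^n_{R/\K}$ with $\phi_n(D)=E$ --- is only gestured at (``lift it to a composite of the $\partial_{x_i^{(j)}}$, adjust the $t$-placement''), and $\phi_n$ does not interact with composition of operators in any evident way, so this lifting is not automatic. The paper does it by hand: it defines $D$ on the monomials $\underline{x}^\alpha$ with $1\leq|\alpha|\leq m$, setting $D(\underline{x}^\alpha)=E_0(x_1^{\alpha_1^0}\cdots x_s^{\alpha_s^0})$ on $\Delta^0$, $D(\underline{x}^\alpha)=\frac{1}{\alpha_i^0+1}E_j\big(x_i^{\alpha_i^0+1}x_{i+1}^{\alpha_{i+1}^0}\cdots x_s^{\alpha_s^0}\big)$ on the sets $\Delta_i^j$ (this inverse is where characteristic zero enters), and $D=0$ on the remaining monomials, and then verifies $\overline{D}(x^\beta)=E(x^\beta)$ using the expansion of $(x^\beta)^{(j)}$ in Remark \ref{xbeta}. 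Your reduction to the smooth ambient ring followed by descent along the prolongation ideal --- which you yourself flag as ``the main obstacle'' --- is likewise not carried out: one must check that the lifted operator kills the prolongation ideal, and nothing in your outline does so. As written, (2) is a plan rather than a proof, and since your argument for (3) invokes (2), it inherits the gap.

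Parts (1) and (3) are otherwise sound and take genuinely different routes from the paper. In (1) you construct the same map ($\widetilde{D}\circ\gamma_n$ coincides with the paper's $\overline{D}(a)=\sum_i D(d_i(a))\otimes t^i$) but verify the order bound by commutator calculus, using the identification of order-$m$ derivations with differential operators killing $1$; the paper instead checks Nakai's product identity directly by a long combinatorial computation (Proposition \ref{der m}). Your shortcut is clean provided you cite that equivalence, but you should treat $n=\infty$ separately: there $\sum_i D(b_i)t^i$ lives a priori in $M[[t]]$ rather than in $M\otimes_{A_\infty}A_\infty[[t]]$, which is exactly why the paper devotes Proposition \ref{der infi} to an inverse-limit argument. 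In (3), granting (2), a surjection of $A_1$-modules from a free module of rank $5$ onto a free module of rank $4$ indeed has nonzero kernel (tensor with the fraction field), which is a nice conceptual explanation of why injectivity must fail for $m\geq 2$; the paper instead exhibits the explicit unconditional kernel element $\frac{1}{m!}\,\partial^m/\partial x_n^m$ (Example \ref{notiny}), which works for all finite $n$ and all $m\geq 2$ without relying on surjectivity, and it additionally treats $n=\infty$, where the ``last variable'' trick is unavailable.
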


Contrary to the case of order $m=1$, we see that there is no isomorphism in the case of $m>1$. In the former case, the notion of trivial extension of a module is an important tool to prove the isomorphism. It is likely that the non-isomorphism of the latter case is related to the lack of a notion of an $m$-trivial extension that is suitable for $m$-order derivations.

In recent years, many authors have studied high-order versions of classical results in the theory of differentials. For instance, a higher-order version of a Jacobian matrix was introduced to study the module of high-order K\"ahler differentials of finitely generated algebras (see \cite{DD2}). Regularity of certain rings was characterized using the module of principal parts (see \cite{BD1, BJNB}). A characterization of the $k$-torsion freeness of the module of high-order differentials was given in terms of a singular locus (see \cite{DH}). New numerical invariants of rings related to the rank of the module of principal parts were introduced (see \cite{BJNB}). Moreover, these algebraic results have had geometric applications: Nobile-like theorems for the higher Nash blow-up (see \cite{DD1, DDLN2, DDLN1,BD}), the study of higher Nash blowups of toric varieties (see \cite{ADE,EC}), and the introduction of new invariants of complex analytic hypersurfaces (see \cite{HMY}). The present article can be placed in this series of recent works. 

Let us describe the content of this paper. In Section 1, we recall the concepts of Hasse-Schmidt derivations and high-order derivations, and present some of its basic properties. In Section 2 we prove Theorem \ref{Der}. Section 3 study the implications of our main theorem on the module of high-order differential. Finally, in Section 4 we present a formula that relates the usual partial derivatives of high-order and the universal Hasse-Schmidt derivations.

\textit{Throughout the paper, we always consider commutative rings with unity.}

\section{High-order derivations and the Hasse-Schmidt algebra}
\subsection{The Hasse-Schmidt derivations and the Hasse-Schmidt algebra}

In this section, we recall the definition of Hasse-Schmidt derivations and some of its basic facts, following \cite{V}. We also present the theorem by T. de Fernex and R. Docampo that relates derivations of the Hasse-Schmidt algebra with derivations of the base ring.

\begin{defi}\cite[Section 1]{V}
Let $n\in\N\cup\{\infty\}.$ Let $k$ be a ring. Let $A$, $B$ be $k$-algebras and let $f:k\to A$ be the structural morphism. A Hasse-Schmidt derivation of order $n$ from $A$ to $B$ over $k$ is a sequence $(D_0,\ldots,D_n) $ (or $(D_0,D_1,\ldots)$ if $n=\infty$), where $D_0:A\to B$ is a $k$-algebra homomorphism, $D_i:A\to B$ are $k$-linear maps, $D_i(f(c))=0$ for $i\in\{1\ldots,n\}$ and $c\in k$, and for all $x,y\in A$ and all $l\in\{0,\ldots, n\}$, the maps $D_l$ satisfy the following rule: 
\[D_l(xy)=\sum_{i+j=l}D_i(x)D_j(y).\]  
\end{defi}

\begin{defi}\cite[Section 1]{V}
The Hasse-Schmidt algebra $HS_{A/k}^n$ is defined as the quotient of a polynomial algebra
\[HS_{A/k}^n=A[x^{(i)}| x\in A, i\in\{1,\ldots,n\}]/I,\] where $I$ is the ideal generated by $\{(x+y)^{(i)}-x^{(i)}-y^{(i)}|x,y\in A, i\in\{1,\ldots,n\}\},$ $\{f(c)^{(i)}|c\in k, i\in\{1,\ldots,n\}\},\{(xy)^{(i)}-\sum_{j+k=i}x^{(j)}y^{(k)}| x,y\in A, i\in\{0,\ldots,n\}\},$ where we identify $x^{(0)}$ with $x$ for all $x\in A.$ We define the universal Hasse-Schmidt derivation $(d_0, d_1,\ldots,d_n)$ from $A$ to $HS_{A/k}^n$ by $d_i(x)=x^{(i)}+I.$
\end{defi}

\begin{nota}\label{nota}
Following the notation of \cite{dFDo1,CN}, we denote by $A_n=HS_{A/k}^n$, $B_n=\frac{A_n[t]}{\langle t^{n+1}\rangle}$ if $n$ is finite, and $B_{\infty}=A_{\infty}[[t]]$ if $n=\infty$. Consider the $k$-algebra homomorphism $\gamma_n^{\#}:A\to B_n,$ $a\mapsto \sum_{j=0}^n d_j(a)t^j.$ Note that $B_n$ is an $A_n$-module and it is also an $A$-module via $\gamma_n^{\#}.$ For $n\in\N\cup\{\infty\},$ define the $A_n$-module $B_n^{\infty}=A_{\infty}[t]/\langle t^{n+1}\rangle.$
\end{nota}

\begin{rem}\label{Blim}
Notice that $B_{\infty}\simeq\underset{n\in\N}{\varprojlim}B_n^{\infty}$ (see \cite[p.~8]{CN}).
\end{rem}

The following result was the main motivation for this paper.

\begin{teo}\cite[Lemma 5.1]{dFDo1}\label{Der1}
Let $n\in\N\cup\{\infty\}.$ Let $k$ be a ring and $A$ be an $k$-algebra. Let $M$ be an $A_n$-module, and consider $M\otimes_{A_n} B_n$ with the $A$-module structure induced from the $A$-module structure on $B_n.$ Then there is a natural isomorphism of $A_n$-modules
\[\Der_k(A_n,M)\simeq\Der_k(A,M\otimes_{A_n}B_n).\]
\end{teo}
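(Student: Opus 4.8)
The plan is to reinterpret both sides as sets of algebra homomorphisms into trivial (square-zero) extensions and then transport one description to the other via the universal property of the Hasse--Schmidt algebra. Write $N=M\otimes_{A_n}B_n$, and recall that for a $k$-algebra $R$ and an $R$-module $P$, a $k$-derivation $R\to P$ is the same as a $k$-algebra section of the projection $R\ltimes P\to R$, via $D\mapsto(r\mapsto(r,D(r)))$, where $R\ltimes P$ is the trivial extension $R\oplus P$ with $(r,p)(r',p')=(rr',rp'+r'p)$. Thus $\Der_k(A_n,M)$ is identified with the sections of $A_n\ltimes M\to A_n$, while $\Der_k(A,N)$ (with the $A$-structure on $N$ induced by $\gn$) is identified with the $k$-algebra maps $A\to B_n\ltimes N$ whose $B_n$-component equals $\gn$, since a $k$-derivation over $\gn$ is precisely a $k$-linear $\delta$ with $\delta(ab)=\gn(a)\delta(b)+\gn(b)\delta(a)$. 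The candidate isomorphism is the explicit map
\[\phi\colon\Der_k(A_n,M)\to\Der_k(A,N),\qquad \phi(D)(a)=\sum_{i=0}^n D(d_i(a))\,t^i,\]
where I use the canonical identification $N\simeq M[t]/\langle t^{n+1}\rangle$ coming from $B_n$ being free over $A_n$ with basis $1,t,\dots,t^n$.

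First I would check that $\phi$ is well defined, $A_n$-linear, and natural. That $\phi(D)$ satisfies the Leibniz rule over $\gn$ is a direct computation: expanding $d_i(ab)=\sum_{p+q=i}d_p(a)d_q(b)$ and applying the derivation property of $D$ on $A_n$ yields exactly $\phi(D)(ab)=\gn(a)\phi(D)(b)+\gn(b)\phi(D)(a)$, while vanishing on $k$ follows from $d_i(f(c))=0$ for $i\ge 1$ and $D(f(c))=0$. Additivity, $A_n$-linearity, and naturality in $M$ are immediate from the formula, so $\phi$ is a natural homomorphism of $A_n$-modules; it remains to prove bijectivity.

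For bijectivity I would invoke the universal property of $A_n$: for any $k$-algebra $S$ there is a bijection $\Phi_S\colon\Hom_{k\text{-alg}}(A_n,S)\simeq\Hom_{k\text{-alg}}(A,S[t]/\langle t^{n+1}\rangle)$, natural in $S$, sending $\psi$ to $a\mapsto\sum_i\psi(d_i(a))t^i$. Taking $S=A_n\ltimes M$ and using the ring isomorphism
\[(A_n\ltimes M)[t]/\langle t^{n+1}\rangle\;\simeq\;B_n\ltimes N\]
(obtained by regrouping coefficients, since $M[t]/\langle t^{n+1}\rangle\simeq N$ as $B_n$-modules) gives a bijection $\Hom_{k\text{-alg}}(A_n,A_n\ltimes M)\simeq\Hom_{k\text{-alg}}(A,B_n\ltimes N)$. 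The main point, and the step I expect to be most delicate, is to show this bijection carries the sections of $A_n\ltimes M\to A_n$ exactly onto the maps with $B_n$-component $\gn$. This follows from naturality of $\Phi$ with respect to the projection $\pi\colon A_n\ltimes M\to A_n$: the induced map on the other side is the projection $B_n\ltimes N\to B_n$, so the $B_n$-component of $\Phi_{A_n\ltimes M}(\psi)$ is $\Phi_{A_n}(\pi\circ\psi)$; since $\Phi_{A_n}(\mathrm{id}_{A_n})=\gn$, a map $\psi$ is a section iff its $B_n$-component is $\gn$. Unwinding the correspondence identifies the resulting bijection with $\phi$, proving the finite-order case. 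For $n=\infty$ I would pass to the limit using $B_\infty\simeq\varprojlim_n B_n^{\infty}$ (Remark \ref{Blim}) together with the compatibility of the relevant $\Hom$- and $\Der_k$-sets under the inverse system; here the only real caveat is the careful identification of $M\otimes_{A_\infty}B_\infty$ with the appropriate limit, which is the point I would treat most carefully.
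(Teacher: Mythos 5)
The paper does not actually prove Theorem \ref{Der1}: it is quoted verbatim from \cite{dFDo1} (Lemma 5.1 there), so there is no internal proof to compare against; the introduction only records that the trivial extension of a module is the key tool. Your argument reconstructs exactly that original proof, and for finite $n$ it is correct and complete: identifying $\Der_k(A_n,M)$ with sections of $A_n\ltimes M\to A_n$ and $\Der_k(A,M\otimes_{A_n}B_n)$ with $k$-algebra maps $A\to B_n\ltimes(M\otimes_{A_n}B_n)$ lifting $\gamma_n^{\#}$ is standard; the regrouping $(A_n\ltimes M)[t]/\langle t^{n+1}\rangle\simeq B_n\ltimes(M\otimes_{A_n}B_n)$ is valid because $B_n$ is free of finite rank over $A_n$, so $M\otimes_{A_n}B_n\simeq M[t]/\langle t^{n+1}\rangle$; and your naturality argument correctly isolates the sections as the maps lying over $\gamma_n^{\#}=\Phi_{A_n}(\mathrm{id})$. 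Unwinding does yield the formula $D\mapsto\sum_i D(d_i(\cdot))\otimes t^i$, which is the map the paper generalizes in Proposition \ref{der m}.

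The one genuine gap is the case $n=\infty$. Your limiting step requires $M\otimes_{A_\infty}B_\infty\simeq\varprojlim_n\bigl(M\otimes_{A_\infty}B_n^{\infty}\bigr)=M[[t]]$, but tensor products do not commute with inverse limits: the canonical map $M\otimes_{A_\infty}A_\infty[[t]]\to M[[t]]$ is in general neither injective nor surjective (surjectivity already fails for $M=\bigoplus_{i\in\N}A_\infty$, since every element in the image has all of its coefficients inside a fixed finitely generated submodule). Equivalently, the regrouping $(A_\infty\ltimes M)[[t]]\simeq B_\infty\ltimes(M\otimes_{A_\infty}B_\infty)$ breaks down. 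So this is not merely a point to "treat carefully" --- the identification you need is false for general $M$. The robust way out, which is the device behind Remark \ref{HomPn} and behind the limit argument of Proposition \ref{der infi}, is to replace $M\otimes_{A_n}B_n$ by $\Hom_{A_n}(P_n,M)$ (these agree for finite $n$ since $P_n\simeq B_n$ is free of finite rank): because $P_\infty=\varinjlim_nP_n$, the functor $\Hom_{A_\infty}(P_\infty,-)$ genuinely converts that colimit into the inverse limit $\varprojlim_n\Hom_{A_\infty}(P_n,M)\simeq M[[t]]$, which is what the trivial-extension computation actually produces. With that substitution (or with a finiteness hypothesis on $M$) your argument goes through for $n=\infty$ as well.
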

%
\subsection{Main theorem}

In this section we state our main theorem. Let us first recall the definition of derivations of order $m\geq1$ and some of its basic properties.

\begin{defi}\cite[Chapter I-1]{N}\label{def der}
	Let $k$ be a ring. Let $A$ be a $k$-algebra and $M$ be an $A$-modulo. A $k$-derivation of order $m\in\N_{\geq1}$ is a $k$-homomorphism $\Delta$ from $A$ to $M$ that satisfy the following identity: 
\begin{align}\label{ruleN}
\Delta(x_0\cdots x_m)= \sum_{s=1}^{m}(-1)^{s-1}\sum_{i_1< \dots <i_s}x_{i_1}\cdots x_{i_s}\Delta(x_0\cdots \check{x}_{i_1}\cdots \check{x}_{i_s}\cdots x_m),
\end{align}
for any set $\{x_0,\dots, x_m\}$ of $m+1$ elements of $A$. The symbol $\check{x}_{i_j}$ means that this element does not appear in the product.  Notice that a $k$-derivation of order $1$ is a usual derivation. We denote by $\Der_k^m(A,M)$ the module of derivations of order $m$ from $A$ to $M$ over $k$. 
\end{defi}

A basic relation among Hasse-Schmidt derivations and high-order derivations is given in the following proposition.

\begin{pro}{\cite[Proposition 5]{N}}
Let $n,m\in\N$. Let $D=(D_0,D_1,\ldots,D_n)$ be a Hasse-Schmidt derivation of order $n$. Then the $m$-th component $D_m$ is a $k$-derivation of order $m$ for $1\leq m \leq n.$
\end{pro}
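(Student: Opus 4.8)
The plan is to reduce everything to a single combinatorial identity. First I would upgrade the two-factor Leibniz rule in the definition of a Hasse-Schmidt derivation to a multi-factor rule: by induction on the number of factors (using commutativity of the rings and the two-factor rule $D_l(uv)=\sum_{a+b=l}D_a(u)D_b(v)$ applied to $u=x_0\cdots x_{r-1}$ and $v=x_r$), one obtains
\[
D_l(x_0 x_1\cdots x_r)=\sum_{i_0+i_1+\cdots+i_r=l}\ \prod_{j=0}^{r}D_{i_j}(x_j).
\]
I would record this as a preliminary observation, since it is the engine of the whole argument.

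Next, fix $1\le m\le n$ and elements $x_0,\ldots,x_m\in A$, and view $B$ as an $A$-module through the $k$-algebra homomorphism $D_0$, so that the scalar action $x_{i_1}\cdots x_{i_s}\cdot\beta$ appearing in \eqref{ruleN} reads $D_0(x_{i_1})\cdots D_0(x_{i_s})\,\beta$ in $B$. Applying the multi-factor rule with $l=m$ to the left-hand side of \eqref{ruleN} gives
\[
D_m(x_0\cdots x_m)=\sum_{i_0+\cdots+i_m=m}\ \prod_{j=0}^m D_{i_j}(x_j),
\]
a sum over all $(m+1)$-tuples of non-negative integers summing to $m$. The key observation is that every such tuple has at least one zero entry, since $m+1$ strictly positive integers would sum to at least $m+1$; in particular each monomial on the right carries at least one factor of the form $D_0(x_j)$.

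I would then expand the right-hand side of \eqref{ruleN} in the same way. For a subset $J=\{i_1<\cdots<i_s\}$ write $x_J=\prod_{j\in J}x_j$ and let $x_{\widehat{J}}$ denote the product over the complementary indices. The factor $D_0(x_J)=\prod_{j\in J}D_0(x_j)$, together with the multi-factor expansion of $D_m(x_{\widehat{J}})$, produces exactly the monomials $\prod_{j=0}^m D_{i_j}(x_j)$ indexed by tuples whose zero-set contains $J$ (setting $i_j=0$ for $j\in J$). Reorganizing the double sum by first fixing a tuple with zero-set $S$ of size $z\ge 1$ and then summing over the nonempty subsets $J\subseteq S$, the coefficient of each monomial becomes
\[
\sum_{s=1}^{z}(-1)^{s-1}\binom{z}{s}=1-(1-1)^z=1,
\]
by the binomial theorem. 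Hence the right-hand side of \eqref{ruleN} collapses to $\sum_{\text{tuples}}\prod_{j}D_{i_j}(x_j)$, which is precisely the left-hand side computed above.

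The main obstacle is purely bookkeeping: matching the index set of the inclusion--exclusion sum on the right with the multi-index expansion on the left, and applying the binomial cancellation to each monomial with the correct sign. Once the multi-factor Leibniz rule is available and one tracks the zero positions of each multi-index, identity \eqref{ruleN} for $\Delta=D_m$ follows from the single binomial computation above; the required $k$-linearity of $D_m$ is immediate, since $D_m$ is one of the $k$-linear maps in the sequence.
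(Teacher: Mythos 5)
Your argument is correct: the multi-factor Leibniz rule, the pigeonhole observation that every $(m+1)$-tuple summing to $m$ has a zero entry (so $z\geq 1$ and also $z\leq m$, which keeps the sum over $s$ within the range $1\leq s\leq m$ of the defining identity), and the inclusion--exclusion coefficient $\sum_{s=1}^{z}(-1)^{s-1}\binom{z}{s}=1$ together give exactly the required identity, with the $A$-module structure on $B$ via $D_0$ handled properly. The paper itself offers no proof, deferring to Nakai's Proposition 5, and your route is the standard one for that result, so there is nothing to reconcile.
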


Let $A$ be a $k$-algebra and denote $I_A:=\ker(A\otimes_{k}A\rightarrow A$, $a\otimes b\mapsto ab)$. Give structure of $A$-module to $A\otimes_kA$ by multiplying on the left entry. For $m\in\N_{\geq1}$, define the $A$-module $$\Omega^{(m)}_{A/k}:=I_A/I_A^{m+1}.$$  Define the map $d_A^m:A\to\Omega^{(m)}_{A/k},$ $a\mapsto 1\otimes a-a\otimes 1 + I_A^{m+1}.$

\begin{defi}\cite[Chapter II-1]{N}
The $A$-module $\Omega^{(m)}_{A/k}$ is called the module of K\"ahler differentials of order $m$ of $A$ over $k$. The map $d_A^m$ is a derivation of order $m$ and is called the canonical derivation of order $m$ of $A$.
\end{defi}

\begin{teo}\cite[Proposition 1.6]{O}\label{upd}
Let $D: A \to M$ be a $k$-derivation of order $m$. There is a unique homomorphism $g:\Omega_{A/k}^{(m)}\to M$ of $A$-modules such that $D=g\circ d_A^m.$
\end{teo}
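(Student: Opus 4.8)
The plan is to build $g$ from $D$ by first extending $D$ to a map on the tensor product and then descending it to the quotient $I_A/I_A^{m+1}$. I would define $\tilde{D}:A\otimes_k A\to M$ by $a\otimes b\mapsto aD(b)$; this is well defined because the assignment is $k$-bilinear, and it is a homomorphism for the left $A$-module structure on $A\otimes_k A$, since $\tilde{D}((c\otimes 1)(a\otimes b))=\tilde{D}(ca\otimes b)=caD(b)=c\,\tilde{D}(a\otimes b)$. The candidate for $g$ is then the restriction $\tilde{D}|_{I_A}$ passed to the quotient by $I_A^{m+1}$, so the crux is to show $\tilde{D}(I_A^{m+1})=0$.

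This vanishing is the heart of the argument and the step I expect to require the most care. First I would record that $I_A$ is generated as a left $A$-module by the elements $d(a):=1\otimes a-a\otimes 1$: indeed any $\sum_i a_i\otimes b_i\in I_A$ satisfies $\sum_i a_ib_i=0$ and hence equals $\sum_i a_i\,d(b_i)$. Since $A\otimes_k A$ is commutative, it follows that $I_A^{m+1}$ is generated as a left $A$-module by the products $\prod_{i=0}^m d(a_i)$ with $a_0,\dots,a_m\in A$. Expanding such a product gives
\[
\prod_{i=0}^m\bigl(1\otimes a_i-a_i\otimes 1\bigr)=\sum_{R\subseteq\{0,\dots,m\}}(-1)^{|R|}\bigl(\textstyle\prod_{i\in R}a_i\bigr)\otimes\bigl(\textstyle\prod_{i\notin R}a_i\bigr),
\]
and applying $\tilde{D}$ yields
\[
\tilde{D}\bigl(\textstyle\prod_{i=0}^m d(a_i)\bigr)=\sum_{R\subseteq\{0,\dots,m\}}(-1)^{|R|}\bigl(\textstyle\prod_{i\in R}a_i\bigr)\,D\bigl(\textstyle\prod_{i\notin R}a_i\bigr).
\]
Separating off the term $R=\varnothing$, which equals $D(a_0\cdots a_m)$, exhibits the right-hand side as exactly the defining identity (\ref{ruleN}) for a $k$-derivation of order $m$ with $x_i=a_i$, rearranged; hence it vanishes. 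Since $\tilde{D}$ is left $A$-linear and kills every generator of $I_A^{m+1}$, we conclude $\tilde{D}(I_A^{m+1})=0$.

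With the vanishing in hand, $\tilde{D}|_{I_A}$ descends to a homomorphism of $A$-modules $g:\Omega^{(m)}_{A/k}=I_A/I_A^{m+1}\to M$, the $A$-linearity being inherited from that of $\tilde{D}$. To verify $D=g\circ d_A^m$ I would compute $g(d_A^m(a))=\tilde{D}(1\otimes a-a\otimes 1)=D(a)-aD(1)$, so it remains only to check $D(1)=0$; this follows by setting $x_0=\cdots=x_m=1$ in (\ref{ruleN}), which forces $D(1)=(1+(-1)^{m+1})D(1)$ and hence $D(1)=0$.

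Finally, for uniqueness I would observe that the images of the elements $d_A^m(a)$ generate $\Omega^{(m)}_{A/k}$ as an $A$-module, again because $I_A$ is generated as a left $A$-module by the $d(a)$. Any $A$-homomorphism satisfying $g\circ d_A^m=D$ is therefore pinned down on a generating set by the values $D(a)$, so such a $g$ is unique.
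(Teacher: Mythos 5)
The paper does not prove this statement: it is quoted from Osborn (\cite[Proposition 1.6]{O}) and used as a black box, so there is no in-paper argument to compare against. Your proof is the standard construction and is essentially correct: the definition of $\tilde{D}$, the generation of $I_A$ (hence of $I_A^{m+1}$ and of $\Omega^{(m)}_{A/k}$) as a left $A$-module by the elements $d(a)=1\otimes a-a\otimes 1$, the descent, and the uniqueness argument are all sound. The one inaccuracy is in the key vanishing step: after expanding $\prod_{i=0}^m d(a_i)$ and applying $\tilde{D}$, the sum runs over \emph{all} $R\subseteq\{0,\dots,m\}$, and separating off $R=\varnothing$ does not leave ``exactly the defining identity (\ref{ruleN}) rearranged'' --- identity (\ref{ruleN}) only accounts for the subsets with $1\leq|R|\leq m$, while your sum also contains the term $R=\{0,\dots,m\}$, namely $(-1)^{m+1}(a_0\cdots a_m)D(1)$, which does not appear in (\ref{ruleN}). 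This term does vanish, but only because $D(1)=0$, a fact you establish only later (and correctly: substituting $x_0=\cdots=x_m=1$ into (\ref{ruleN}) gives $D(1)=\bigl(\sum_{s=1}^m(-1)^{s-1}\binom{m+1}{s}\bigr)D(1)=(1+(-1)^{m+1})D(1)$, forcing $D(1)=0$ in either parity). You should prove $D(1)=0$ first and cite it when discarding the full-subset term; with that reordering the argument is complete.
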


\begin{rem}\label{Omelim}
For $0\leq i\leq j\leq\infty$, by the previous universal property, $d_{A_j}^m|_{A_i}:A_i\to\Omega_{A_j}^{(m)}$ induces an $A$-homomorphism $g_{i,j}:\Omega_{A_i}^{(m)}\to\Omega_{A_j}^{(m)}.$ These homomorphisms satisfy \[g_{j,k}\circ g_{i,j}=g_{i,k}, \mbox{   }\textnormal{ for all } 0\leq i\leq j\leq k\leq\infty\] \[g_{i,i}=Id_{A_i}, \mbox{   }\textnormal{ for all } i\in\N\cup\{\infty\}.\] Hence, they induce a direct system, and \begin{align}\label{Omeinf}\Omega_{A_{\infty}}^{(m)}=\underset{n\in\N}{\varinjlim}\Omega_{A_{n}}^{(m)}.\end{align}
\end{rem}

We can now state our main theorem.

\begin{teo}\label{mT}
Let $n\in\N\cup\{\infty\}.$ Let $m\in\N_{\geq1}$. Let $A$ be a $k$-algebra. Let $M$ be a $A_n$-module, and consider $M\otimes_{A_n} B_n$ with the $A$-module structure induced from the $A$-module structure on $B_n.$ 
\begin{enumerate}
\item[(1)] There is a natural homomorphism of $A_n$-modules
$$\phi_n:\Der_k^m(A_n,M)\to\Der_k^m(A,M\otimes_{A_n}B_n).$$
\item[(2)] Let $\K$ be a field of characteristic zero. Assume that $A$ is a finitely generated $\K$-algebra. Then $\phi_n$ is surjective.
\item[(3)] In general, $\phi_n$ is not injective.
\end{enumerate}
\end{teo}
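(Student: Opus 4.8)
The plan is to build $\phi_n$ by the same two-step recipe that underlies Theorem \ref{Der1}, and then to analyze it through the differential-operator description of high-order derivations. For part (1), write $R=k[t]/\langle t^{n+1}\rangle$ (and $R=k[[t]]$ for $n=\infty$) and use the identification $B_n\simeq A_n\otimes_k R$, under which $M\otimes_{A_n}B_n\simeq\bigoplus_{j=0}^{n}Mt^{j}$. Given $\Delta\in\Der_k^m(A_n,M)$ I would extend it coefficientwise to $\widetilde{\Delta}:=\Delta\otimes_k\operatorname{id}_R:B_n\to M\otimes_{A_n}B_n$, $\widetilde{\Delta}(\sum_j a_j t^j)=\sum_j\Delta(a_j)t^j$, and set $\phi_n(\Delta):=\widetilde{\Delta}\circ\gn$. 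To see that $\phi_n(\Delta)$ is a $k$-derivation of order $m$ I would invoke the characterization of an order-$m$ derivation as a differential operator of order $\leq m$ vanishing at $1$: by $R$-linearity and centrality of $R$, the iterated commutators defining such an operator reduce for $\widetilde{\Delta}$ to those for $\Delta$, which vanish, and $\widetilde{\Delta}(1)=0$; precomposing with the $k$-algebra homomorphism $\gn$ preserves both facts. The $A_n$-linearity of $\phi_n$ and its naturality in $M$ are then routine checks on the formula. For $n=\infty$ the extension $\widetilde{\Delta}$ is defined $t$-adically and one passes through the limits $B_\infty\simeq\varprojlim B_n^{\infty}$ and $\Omega_{A_\infty}^{(m)}=\varinjlim_n\Omega_{A_n}^{(m)}$ of Remarks \ref{Blim} and \ref{Omelim}.

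For part (2) I would unwind the statement. Since $M\otimes_{A_n}B_n=\bigoplus_{j=0}^{n}Mt^{j}$, an element $D\in\Der_\K^m(A,M\otimes_{A_n}B_n)$ is a tuple $(D_0,\dots,D_n)$ of $\K$-linear maps $D_j:A\to M$, and $\phi_n(\Delta)(a)=\sum_j\Delta(d_j(a))t^j$ shows that lifting $D$ is equivalent to producing an order-$m$ derivation $\Delta:A_n\to M$ with $\Delta(d_j(a))=D_j(a)$ for all $a\in A$ and all $j$. I would first reduce to finite $n$ using the limit presentations of Remarks \ref{Blim} and \ref{Omelim}, and then reduce to the case $A=\pols$ by choosing a presentation $A=\pols/J$ and transporting $D$ along the functorially induced surjection $HS^n_{\pols/\K}\twoheadrightarrow A_n$. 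In the polynomial case $A_n$ is again a polynomial ring $\K[x_i^{(j)}]$, so by Theorem \ref{upd} and the freeness of $\OmeAn$ in characteristic zero every order-$m$ derivation of $A_n$ into $M$ is a combination $\sum_\beta\mu_\beta\,\partial^{[\beta]}$ with $\mu_\beta\in M$, where $\partial^{[\beta]}$ are the Hasse--Schmidt partials in the jet variables. Feeding in the explicit expansion of $d_j(a)$ in the $x_i^{(\ell)}$ from the formula of Section 4, the equations $\Delta(d_j(a))=D_j(a)$ become a linear system for the coefficients $\mu_\beta$, which I would solve by exploiting the order-$m$ identities \eqref{ruleN} satisfied by $D$.

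The main obstacle is exactly this solvability together with the descent step. A high-order derivation is not determined by its values on algebra generators, so one cannot merely prescribe $\Delta$ on the $x_i^{(j)}$ and extend by \eqref{ruleN}; one must verify that the values forced on the elements $d_j(a)$ are consistent with a genuine order-$m$ derivation of $A_n$, and here both the characteristic-zero hypothesis (through the $1/\alpha!$ normalizations that make $\OmeA$ and $\OmeAn$ free of the expected rank) and the order-$m$ constraints on $D$ are indispensable. In the descent to $A=\pols/J$ one must further check that the lift constructed over $\pols$ factors through $A_n$, i.e.\ that it annihilates, in the order-$m$ sense, the ideal defining $A_n$ inside $HS^n_{\pols/\K}$; this compatibility is the technical heart of the argument.

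Finally, for part (3) it is enough to exhibit a single example, and I would take $A=\K[x]$, $n=1$, $m=2$, and $M=A_1=\K[x,x^{(1)}]$. Here $\gamma_1^{\#}(f)=f(x)+f'(x)\,x^{(1)}t$ is affine in $x^{(1)}$, so the second Hasse partial $\Delta=\partial^2/\partial(x^{(1)})^2$, which is a nonzero $k$-derivation of $A_1$ of order $2$, satisfies $\widetilde{\Delta}(\gamma_1^{\#}(f))=0$ for every $f$. Thus $\phi_1(\Delta)=0$ while $\Delta\neq0$, which shows that $\phi_n$ need not be injective.
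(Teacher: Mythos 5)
Your parts (1) and (3) are essentially fine. For (1) you route the verification through the characterization of an order-$m$ derivation as an order-$\le m$ differential operator killing $1$, checking iterated commutators on the algebra generators $A_n\cup\{t\}$ of $B_n$ and precomposing with $\gn$; this yields the same map $D\mapsto\sum_j D(d_j(a))t^j$ as the paper, which instead verifies the Nakai identity (\ref{ruleN}) for $\overline{D}$ by a direct combinatorial computation (Proposition \ref{der m}), and both treatments of $n=\infty$ pass through the same limits. Your example for (3) is the $n=1$, $m=2$ instance of the paper's Example \ref{notiny}, and your justification (that $d_0(f)$ and $d_1(f)=f'(x)x^{(1)}$ have degree $\le1$ in $x^{(1)}$) is correct and even cleaner than the induction in the paper.

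The genuine gap is in part (2). After reducing to the polynomial case and writing the sought lift as $\Delta=\sum_\beta\mu_\beta\,\partial^{[\beta]}$, you observe that the conditions $\Delta(d_j(a))=D_j(a)$ form a linear system for the $\mu_\beta$ and then declare its solvability to be ``the main obstacle'' and ``the technical heart of the argument'' without solving it. That solvability \emph{is} the surjectivity claim; nothing has been proved until it is established, and it is not a routine consequence of the order-$m$ identities satisfied by $E$. The paper resolves it by an explicit and rather particular choice: $D$ is set to zero on every monomial $\underline{x}^{\alpha}$ with $1\le|\alpha|\le m$ except those in $\Delta^0$ (monomials in the $x_i^{(0)}$ alone, sent to $E_0(x_1^{\alpha_1^0}\cdots x_s^{\alpha_s^0})$) and those in $\Delta_i^j$ (monomials $x_i^{(j)}(x_i^{(0)})^{\alpha_i^0}\cdots(x_s^{(0)})^{\alpha_s^0}$, sent to $\frac{1}{\alpha_i^0+1}E_j(x_i^{\alpha_i^0+1}\cdots x_s^{\alpha_s^0})$). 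The whole point is that in the expansion of $(x^\beta)^{(j)}$ from Remark \ref{xbeta} exactly one surviving term lands in each $\Delta_i^j$, with multiplicity $\beta_i$ cancelling the normalization, while every remaining term $x^{(\gamma)}$ with $\gamma\in\Gamma^j$ is annihilated by construction; this is precisely the computation your sketch defers. The same applies to your descent from $\pols$ to a quotient: you correctly identify that the constructed lift must annihilate, in the order-$m$ sense, the ideal cutting out $A_n$, but you offer no argument, so as written part (2) remains unproved.
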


In all the discussion that follows, the number $m$ will always be fixed. For this reason we do not include the letter $m$ in the notation $\phi_n.$
\section{Proof of Theorem \ref{mT}}

We use notation \ref{nota} throughout this section.

\subsection{Proof of Theorem \ref{mT} $(1)$}

\begin{lem}\label{An-A}
Let $\rho_n:\Der_k^m(A,M\otimes_{A_n}B_n)\to\Hom_{A}(\Omega_{A/k}^{(m)},M\otimes_{A_n}B_n)$ be the isomorphism of $A$-modules of Theorem \ref{upd}. Then $\rho_n$ is also an isomorphism of $A_n$-modules.
\end{lem}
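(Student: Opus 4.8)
The plan is to exploit the uniqueness clause built into the universal property of Theorem \ref{upd}, so that the whole statement reduces to one commuting-actions check. First I would make explicit the two module structures carried by $N:=M\otimes_{A_n}B_n$. On one hand, $N$ is an $A$-module via $\gamma_n^{\#}$; on the other hand, since $M$ and $B_n$ are $A_n$-modules, $N$ is an $A_n$-module. The crucial observation is that \emph{both} actions factor through the $B_n$-module structure on $N$: the $A$-action is the composite $A\xrightarrow{\gamma_n^{\#}}B_n\to\operatorname{End}(N)$, while the $A_n$-action is the composite $A_n\hookrightarrow B_n\to\operatorname{End}(N)$. Because $B_n$ is commutative, multiplication on $N$ by an element of $A$ and multiplication by an element of $A_n$ commute. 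This is the single fact that drives everything.

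Next I would describe the $A_n$-module structures on the two sides of the claimed isomorphism. For $\Delta\in\Der_k^m(A,N)$ and $a\in A_n$ I set $(a\cdot\Delta)(x):=a\cdot\Delta(x)$, using the $A_n$-action on $N$; likewise for $g\in\Hom_A(\Omega^{(m)}_{A/k},N)$ I set $(a\cdot g)(\omega):=a\cdot g(\omega)$. I would then check that these operations are well defined, i.e.\ that $a\cdot\Delta$ is again an order-$m$ derivation and that $a\cdot g$ is again $A$-linear ($k$-linearity being automatic, since $k$ acts through $B_n$ in both structures). Both verifications reduce to the commutativity from the previous paragraph: in the defining identity \eqref{ruleN} each coefficient $x_{i_1}\cdots x_{i_s}$ acts on $N$ through $A$, so applying $a\cdot{}$ to the whole identity and moving $a$ past each such coefficient term by term reproduces the identity for $a\cdot\Delta$; the $A$-linearity of $a\cdot g$ is the same computation, namely $(a\cdot g)(x\cdot\omega)=a\cdot\bigl(x\cdot g(\omega)\bigr)=x\cdot\bigl(a\cdot g(\omega)\bigr)$ for $x\in A$.

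Finally, I would check that $\rho_n$ respects these structures. Given $\Delta\in\Der_k^m(A,N)$ with $\rho_n(\Delta)=g$, so that $\Delta=g\circ d_A^m$, I observe that for $a\in A_n$ the $A$-homomorphism $a\cdot g$ satisfies $(a\cdot g)\circ d_A^m=a\cdot(g\circ d_A^m)=a\cdot\Delta$. Since $\rho_n(a\cdot\Delta)$ is, by Theorem \ref{upd}, the \emph{unique} $A$-homomorphism whose composite with $d_A^m$ equals the order-$m$ derivation $a\cdot\Delta$, uniqueness forces $\rho_n(a\cdot\Delta)=a\cdot g=a\cdot\rho_n(\Delta)$. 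As $\rho_n$ is already a bijection of $A$-modules by Theorem \ref{upd}, this upgrades it to an isomorphism of $A_n$-modules.

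The only genuine subtlety, and the step I would be most careful about, is the commutativity of the $A$- and $A_n$-actions on $N$; once that is in hand the two well-definedness checks and the compatibility of $\rho_n$ are formal, with the uniqueness in the universal property doing all the work. For the case $n=\infty$ the same argument applies verbatim with $B_\infty=A_\infty[[t]]$ in place of $B_n$, this ring being again commutative and receiving both the $A$- and the $A_\infty$-actions.
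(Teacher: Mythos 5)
Your proposal is correct and follows essentially the same route as the paper: both define the $A_n$-actions on $\Der_k^m(A,M\otimes_{A_n}B_n)$ and $\Hom_A(\Omega^{(m)}_{A/k},M\otimes_{A_n}B_n)$ by postcomposing with multiplication on the target, and then check that $\rho_n$ intertwines them (the paper does this by evaluating on the generators $d_A^m(f)$, you via the uniqueness clause of Theorem \ref{upd} --- the same computation). Your write-up is in fact more careful than the paper's, since you explicitly verify that $a\cdot\Delta$ still satisfies the order-$m$ Leibniz identity via the commutativity of the $A$- and $A_n$-actions through $B_n$, a point the paper leaves implicit.
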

\begin{proof}
Since $M\otimes_{A_n}B_n$ is a $A_n$-module, we can give structure of $A_n$-module to $\Der^m_k(A,M\otimes_{A_n} B_n)$ via the function 
\[\begin{array}{rccl}
&A_n\times \Der^m_k(A,M\otimes_{A_n} B_n)&\longrightarrow&\Der^m_k(A,M\otimes_{A_n} B_n)\\
&(F,D)&\mapsto&FD,
\end{array}\]
where $FD: A\to M\otimes_{A_n} B_n$, $c\mapsto FD(c)$ (this notation means to multiply $F$ on the second entry). In the same way we give structure of $A_n$-module to $\Hom_{A}(\Omega_{A/k}^{(m)},M\otimes_{A_n}B_n)$. With these structures $\rho_n$ is a $A_n$-homomorphism. Indeed, let $F\in A_n$ and $D\in\Der^m_k(A,M\otimes_{A_n} B_n).$ For $d_A^m(f)\in\Omega_{A/k}^{(m)}$ we have
\begin{align}
\Big(\rho_n(FD)\Big)(d_A^m(f))&=FD(f)=F \Big(D(f)\Big)=F\Big(\rho_n(D)(d_A^m(f))\Big).\notag
\end{align}
Hence, $\rho_n(FD)=F\rho_n(D).$
\end{proof}

\begin{pro}\label{der m}
Let $n\in\N$, $m\in\N_{\geq1}$. Let $M$ be an $A_n$-module. Consider $M\otimes_{A_n}B_n$ with the $A$-module structure induced by that of $B_n$.
Let $D\in \Der^m_k(A_n,M)$ and consider the map
\begin{align}
\overline{D}:A&\rightarrow M\otimes_{A_n}B_n\notag\\
a&\mapsto \sum_{i=0}^n D\circ d_i(a)\otimes t^i.\notag
\end{align}
Then $\overline{D}\in\Der^m_k(A,M\otimes_{A_n} B_n)$. In particular, we have a well-defined homomorphism of $A_n$-modules 
$\phi_n:\Der^m_k(A_n,M)\rightarrow \Der^m_k(A,M\otimes_{A_n} B_n)$, $D\mapsto \overline{D}$.
\end{pro}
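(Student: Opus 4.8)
The plan is to realize $\overline{D}$ as a composite of two constructions, each of which manifestly preserves the order-$m$ derivation property, so as to avoid any direct combinatorial manipulation of the convolution product on $B_n$. Write $R=k[t]/\langle t^{n+1}\rangle$ and recall $\gn:A\to B_n$, $a\mapsto\sum_{i=0}^n d_i(a)t^i$, which is the $k$-algebra homomorphism of Notation \ref{nota}. First I would record the elementary fact that an order-$m$ derivation precomposed with a $k$-algebra homomorphism is again an order-$m$ derivation: if $g:A\to A'$ is a $k$-algebra homomorphism, $N$ is an $A'$-module regarded as an $A$-module through $g$, and $\Delta\in\Der_k^m(A',N)$, then $\Delta\circ g\in\Der_k^m(A,N)$. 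This is immediate from the defining identity \eqref{ruleN}: substituting $g(x_0),\dots,g(x_m)$ into the identity for $\Delta$ and using that $g$ preserves products turns it into the identity for $\Delta\circ g$, because the coefficient $x_{i_1}\cdots x_{i_s}\in A$ acts on $N$ precisely as $g(x_{i_1}\cdots x_{i_s})=g(x_{i_1})\cdots g(x_{i_s})$.

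The crux, which I expect to be the main obstacle, is the following base-change statement. Consider the coefficientwise map
\[\tilde{D}:B_n\to M\otimes_{A_n}B_n,\qquad \textstyle\sum_{i=0}^n a_it^i\mapsto\sum_{i=0}^n D(a_i)\otimes t^i.\]
I claim $\tilde{D}\in\Der_k^m(B_n,M\otimes_{A_n}B_n)$. The key simplification is the identification of $A_n$-algebras $B_n=A_n[t]/\langle t^{n+1}\rangle\cong A_n\otimes_k R$, under which $M\otimes_{A_n}B_n\cong M\otimes_k R$ and, by a direct check on the basis $1,t,\dots,t^n$, the map $\tilde{D}$ becomes $D\otimes_k\operatorname{id}_R$. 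Thus it suffices to prove that for any $k$-algebra $R$ the map $D\otimes_k\operatorname{id}_R:A_n\otimes_k R\to M\otimes_k R$ is an order-$m$ $k$-derivation. Here I would argue by $k$-multilinearity, reducing \eqref{ruleN} to simple tensors $x_p=a_p\otimes r_p$ with $a_p\in A_n$, $r_p\in R$: applying the order-$m$ identity for $D$ to $a_0,\dots,a_m$ and tensoring with $r_0\cdots r_m$, I would then regroup the scalar factors as $r_0\cdots r_m=(r_{i_1}\cdots r_{i_s})(r_0\cdots\check{r}_{i_1}\cdots\check{r}_{i_s}\cdots r_m)$ so that each summand exhibits the coefficient $x_{i_1}\cdots x_{i_s}=(a_{i_1}\cdots a_{i_s})\otimes(r_{i_1}\cdots r_{i_s})$ acting on $(D\otimes\operatorname{id}_R)(x_0\cdots\check{x}_{i_1}\cdots\check{x}_{i_s}\cdots x_m)$. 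This reproduces \eqref{ruleN} for $D\otimes\operatorname{id}_R$. The index bookkeeping is routine; the only point requiring care is matching the module action of $A_n\otimes_k R$ on $M\otimes_k R$ with the regrouped scalars.

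With these two facts in hand I would conclude as follows. A direct computation gives $\overline{D}=\tilde{D}\circ\gn$, since $\tilde{D}(\gn(a))=\tilde{D}\big(\sum_i d_i(a)t^i\big)=\sum_i D(d_i(a))\otimes t^i=\overline{D}(a)$. As $\gn$ is a $k$-algebra homomorphism and $\tilde{D}$ is an order-$m$ $k$-derivation, the composition fact yields $\overline{D}\in\Der_k^m(A,M\otimes_{A_n}B_n)$, proving the first assertion and showing $\phi_n$ is well defined. Finally I would verify that $\phi_n$ is $A_n$-linear: additivity is clear from the definition of $\overline{D}$, and for $F\in A_n$ one has $\overline{FD}=F\overline{D}$ because $F$ acts on $M\otimes_{A_n}B_n$ through the $M$-factor, so that $\overline{FD}(a)=\sum_i (F\cdot D(d_i(a)))\otimes t^i=F\cdot\overline{D}(a)$, exactly the $A_n$-module structures on source and target fixed in Lemma \ref{An-A}. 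Hence $\phi_n$ is a well-defined homomorphism of $A_n$-modules.
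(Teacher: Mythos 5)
Your argument is correct, and it takes a genuinely different route from the paper's. The paper proves that $\overline{D}$ satisfies \eqref{ruleN} by direct computation: it expands $d_i(a_0\cdots a_m)$ via the Hasse-Schmidt Leibniz rule into a sum over multi-indices $\alpha$ with $|\alpha|=i$, applies the order-$m$ identity for $D$ termwise, and then reorganizes the resulting multiple sum until the factors $\prod_k\gamma_n^{\#}(a_{j_k})$ reappear --- a page of multi-index bookkeeping. You instead factor $\overline{D}=\tilde{D}\circ\gamma_n^{\#}$ and isolate two formal stability properties of order-$m$ derivations: pullback along a $k$-algebra homomorphism (with the module structure restricted along it), and coefficientwise base change $D\mapsto D\otimes_k\operatorname{id}_R$ under $B_n\cong A_n\otimes_k R$ with $R=k[t]/\langle t^{n+1}\rangle$. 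Both verifications are routine once one notes that the defect of \eqref{ruleN}, namely $\Delta(x_0\cdots x_m)-\sum_s(-1)^{s-1}\sum_{i_1<\cdots<i_s}x_{i_1}\cdots x_{i_s}\Delta(x_0\cdots\check{x}_{i_1}\cdots\check{x}_{i_s}\cdots x_m)$, is $k$-multilinear in $(x_0,\dots,x_m)$ (each $x_j$ occurs exactly once in every term, either in the scalar or inside $\Delta$), so vanishing may be checked on simple tensors; that multilinearity observation is the one step you should make fully explicit, since the identity is not linear term by term in any naive sense. Your decomposition makes transparent where the Hasse-Schmidt Leibniz rule actually enters (it is precisely the multiplicativity of $\gamma_n^{\#}$), it yields the base-change lemma for an arbitrary $k$-algebra $R$ as a byproduct, and it parallels the structure of the $m=1$ argument of de Fernex and Docampo, whereas the paper re-derives the combinatorics by hand. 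Your treatment of the $A_n$-linearity of $\phi_n$ at the end agrees with the paper's, which delegates it to Lemma \ref{An-A}.
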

\begin{proof}
First, by Lemma \ref{An-A}, $\Der^m_k(A,M\otimes_{A_n} B_n)$ has structure of $A_n$-modulo, in particular the following map is $A_n$-linear, 
\[\begin{array}{rccl}
&\Der^m_k(A_n,M)&\longrightarrow&\Der^m_k(A,M\otimes_{A_n} B_n)\\
&D&\mapsto&\overline{D}.
\end{array}\]
Since $D$ and $d_i$ are $k$-linear, the same goes for $\overline{D}$. Now we show that the identity of definition \ref{def der} holds for $\overline{D}$. By definition,
\begin{align}\label{eq1wd} 
&\overline{D}(a_0\cdots a_m)=\sum_{i=0}^nD((a_0\cdots a_m)^{(i)})\otimes t^i=\sum_{i=0}^nD(\sum_{\substack{\alpha\in\mathbb{N}^{m+1}\\|\alpha|=i}} a_0^{(\alpha_0)}\cdots a_m^{(\alpha_{m})})\otimes t^i\notag\\&%
= \sum_{i=0}^{n}\sum_{\substack{\alpha\in\mathbb{N}^{m+1}\\|\alpha|=i}}\sum_{s=1}^{m}(-1)^{s-1}\sum_{j_1< \dots <j_s}a_{j_1}^{(\alpha_{j_1})}\cdots a_{j_s}^{(\alpha_{j_s})}D\Big(a_0^{(\alpha_{0})}\cdots \check{a}_{i_1}^{(\alpha_{j_1})}\cdots \check{a}_{i_s}^{(\alpha_{j_s})}\cdots a_m^{(\alpha_{m})}\Big)\otimes t^i.
\end{align}
For each subset $j=\{j_1,j_2,\ldots,j_s\}\subset\{0,1,\ldots,m\}$ such that $j_1<j_2<\ldots<j_s$, we set 
 \[\begin{split}
D\big(\check{a_j}^{(\alpha)}\big)&:=D\big(a_0^{(\alpha_0)}\cdots \check{a}_{j_1}^{(\alpha_{j_1})}\cdots \check{a}_{j_s}^{(\alpha_{j_s})}\cdots a_m^{(\alpha_{m})}\big),\\
D\big(\check a_j\big)&:=D\big(a_0\cdots\check a_{j_1}\cdots \check a_{j_s}\cdots a_m\big),\\
 a_j^{(\alpha)}&:=a_{j_1}^{\alpha_{j_1}}a_{j_2}^{\alpha_{j_2}}\cdots a_{j_s}^{\alpha_{j_s}},
\end{split}\] where the symbol $\check{a}_{j_i}^{\alpha_{j_i}}$ means that this element does not appear in the product. Using this notation the equation (\ref{eq1wd}) is rewrite as

\begin{align}
\overline{D}(a_0\cdots a_m)=\sum_{i=0}^{n}\big(\sum_{\substack{\alpha\in\mathbb{N}^{m+1}\\|\alpha|=i}}\sum_{s=1}^m(-1)^{s-1} \sum_{j_1<\dots<j_s}a_j^{(\alpha)}D(\check{a_j}^{\alpha})\big)\otimes t^i.
\end{align}
For each $\alpha_j=(\alpha_{j_1},...,\alpha_{j_s})\in\N^s$ we set $\Gamma_j=\{\gamma\in\N^{m+1}|\gamma_{j_k}=\alpha_{j_k}, k\in\{1,\dots,s\}, |\gamma|=i\}$. Notice that
 \begin{align*}
 &\sum_{i=0}^{n}\sum_{s=1}^m(-1)^{s-1} \sum_{j_1<\dots<j_s}\sum_{\substack{\alpha_j\in\N^{s}\\0\leq |\alpha_j|\leq i}}a_j^{(\alpha_j)}\sum_{\gamma\in\Gamma_j}D(\check{a_j}^{(\gamma)})\\
 &=\sum_{i=0}^{n}\sum_{\substack{\alpha\in\mathbb{N}^{m+1}\\|\alpha|=i}}\sum_{s=1}^m(-1)^{s-1} \sum_{j_1<\dots<j_s}a_j^{(\alpha)}D(\check{a_j}^{(\alpha)}).
 \end{align*}
Then 
\[\begin{split}\overline{D}(a_0\cdots a_m)&=\sum_{i=0}^{n}\sum_{s=1}^m(-1)^{s-1} \sum_{j_1<\dots<j_s}\sum_{\substack{\alpha_j\in\N^{s}\\0\leq |\alpha_j|\leq i}}a_j^{(\alpha_j)}\sum_{\gamma\in\Gamma_j}D(\check{a_j}^{(\gamma)})\\&
=\sum_{i=0}^{n}\sum_{s=1}^m(-1)^{s-1} \sum_{j_1<\dots<j_s}\sum_{\substack{\alpha_j\in\N^{s}\\0\leq |\alpha_j|\leq i}}a_j^{(\alpha_j)}D(\sum_{\gamma\in\Gamma_j}\check{a_j}^{(\gamma)}).\end{split}\]
Set $\ell_j=\sum_{k=1}^s\alpha_{j_k}=|\alpha_j|.$ Thus
\[\begin{split}
\sum_{\gamma\in\Gamma_j}\check{a_j}^{(\gamma)}&=\sum_{\gamma\in\Gamma_j}a_0^{(\gamma_0)}\cdots\check{a}_{j_1}^{(\alpha_{j_1})}\cdots\check{a}_{j_s}^{(\alpha_{j_s})}\cdots \check{a}_{m}^{(\gamma_{m})}=\sum_{\substack{\gamma'\in\N^{m+1-s}\\|\gamma'|=i-\ell_j}}a_0^{(\gamma'_0)}a_1^{(\gamma'_1)}\cdots \check{a}_{j_1}\cdots\check{a}_{j_s} \cdots a_m^{(\gamma'_{m+q-s})}\\&=(\check{a_j})^{(i-\ell_j)}.
\end{split}\]
As a consequence,
 \[\begin{split}
\overline{D}(a_0\cdots a_m)&=\sum_{i=0}^{n}\sum_{s=1}^m(-1)^{s-1} \sum_{j_1<\dots<j_s}\sum_{\substack{\alpha_j\in\N^{s}\\0\leq |\alpha_j|\leq i}}a_j^{(\alpha_j)}D\big((\check{a_j})^{(i-\ell_j)}\big)\otimes t^i\\&
=\sum_{i=0}^{n}\sum_{s=1}^m(-1)^{s-1} \sum_{j_1<\dots<j_s}\sum_{\substack{\alpha_j\in\N^{s}\\0\leq |\alpha_j|\leq i}}D\big((\check{a_j})^{(i-\ell_j)}\big)\Big)\otimes a_j^{(\alpha_j)}  t^{\ell_j+i-\ell_j}\\&
=\sum_{i=0}^{n}\sum_{s=1}^m(-1)^{s-1} \sum_{j_1<\dots<j_s}\sum_{\substack{\alpha_j\in\N^{s}\\0\leq |\alpha_j|\leq i}}(a_j^{(\alpha_j)}t^{\ell_j})\Big(D\big((\check{a_j})^{(i-\ell_j)}\big)\otimes   t^{i-\ell_j}\Big).
\end{split}\]
We also have,
\[\begin{split}&\sum_{i=0}^{n}\sum_{\substack{\alpha_j\in\N^{s}\\0\leq |\alpha_j|\leq i}}(a_j^{(\alpha_j)}t^{\ell_j})\Big(D\big((\check{a_j})^{(i-\ell_j)}\big)\otimes   t^{i-\ell_j}\Big)\\&=\sum_{\substack{\alpha_j\in\N^{s}\\0\leq |\alpha_j|\leq i}}\big(\prod_{k=1}^sa_{j_k}^{\alpha_{j_k}}  t^{\ell_j}\big)\sum_{i-\ell_j=0}^n\big(D((\check{a_j})^{(i-\ell_j)}\otimes t^{i-\ell_j}\big).\end{split}\]
Hence,
\[
\begin{split}
\overline{D}(a_0\cdots a_m)=\sum_{s=1}^m(-1)^{s-1}\sum_{j_1<\dots<j_s}\big(\sum_{\substack{\alpha_j\in\N^{s}\\0\leq |\alpha_j|\leq i}}\prod_{k=1}^sa_{j_k}^{\alpha_{j_k}}  t^{\ell_j}\big)\sum^n_{i-\ell_j=0}\big(D((\check{a_j})^{(i-\ell_j)}\otimes  t^{i-\ell_j}\big).
\end{split}\]
Recall that,
\[\prod_{k=1}^s\gamma_n^{\#}(a_{j_k})=\prod_{k=1}^{s}\big(\sum_{i=0}^n(a_{j_k})^{(i)}t^i\big)=\sum_{i=0}^n\big(\sum_{i_1+\cdots+i_s=i}a_{j_1}^{(i_1)}\cdots a_{j_s}^{(i_s)}t^i\big).\]
We conclude,
\[\begin{split}\overline{D}(a_0\cdots a_m)=&\sum_{s=1}^m(-1)^{s-1}\sum_{j_1<\dots<j_s}\big(\prod_{k=1}^s\gamma_n^{\#}(a_{j_k})\big)\big(\overline{D}(a_0\cdots\check a_{j_1}\cdots \check a_{j_s}\cdots a_m\big)\big)\\&
=\sum_{s=1}^m(-1)^{s-1}\sum_{j_1<\dots<j_s}a_{j_1}a_{j_2}\cdots a_{j_s}\overline{D}(a_0\cdots\check a_{j_1}\cdots \check a_{j_s}\cdots a_m\big).\end{split}.\]
\end{proof}

\begin{pro}\label{der infi}
Let $M$ be an $A_{\infty}$-module. There exist a $A_{\infty}$-homomorphism \[\Der_k^m(A_{\infty},M)\to \Der_k^m(A,M\otimes_{A_{\infty}}B_{\infty}).\]
\end{pro}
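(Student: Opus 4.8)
The plan is to reduce the infinite-order case $n=\infty$ to the finite-order case $n\in\N$ already established in Proposition \ref{der m}, using the two inverse/direct limit descriptions recorded earlier in the excerpt. Specifically, Remark \ref{Blim} gives $B_{\infty}\simeq\varprojlim_{n}B_n^{\infty}$ and equation (\ref{Omeinf}) in Remark \ref{Omelim} gives $\Omega_{A_{\infty}}^{(m)}=\varinjlim_{n}\Omega_{A_n}^{(m)}$. The idea is that a high-order derivation out of $A_{\infty}$ is controlled, through the universal property of Theorem \ref{upd}, by a homomorphism out of $\Omega_{A_\infty}^{(m)}$, and this direct limit lets us assemble the target map from the finite-level maps $\phi_n$.

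First I would give the explicit definition of the candidate homomorphism, mirroring Proposition \ref{der m}: for $D\in\Der_k^m(A_\infty,M)$ define
\[
\overline{D}:A\to M\otimes_{A_\infty}B_\infty,\qquad a\mapsto \sum_{i=0}^{\infty}D\circ d_i(a)\otimes t^i,
\]
where now the sum is a formal power series in $t$, living in $M\otimes_{A_\infty}B_\infty$ since $B_\infty=A_\infty[[t]]$. The universal Hasse-Schmidt derivation $(d_0,d_1,\ldots)$ of order $\infty$ makes each $d_i(a)$ an element of $A_\infty$, so each summand is well defined. Then I would verify that $\overline{D}$ is $k$-linear (immediate from $k$-linearity of $D$ and of each $d_i$) and satisfies the order-$m$ Leibniz rule of Definition \ref{def der}. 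The key observation making this routine is that the order-$m$ identity (\ref{ruleN}) involves only finitely many factors $a_0,\ldots,a_m$, so the entire computation of Proposition \ref{der m}—expanding $(a_0\cdots a_m)^{(i)}$ via the Hasse-Schmidt product rule, reindexing, and recognizing $\prod_k\gamma_\infty^{\#}(a_{j_k})$—goes through verbatim with the upper limit $n$ replaced by $\infty$, comparing coefficients of each fixed power $t^i$ on both sides. Since for fixed $i$ only finitely many terms contribute to the $t^i$-coefficient, no convergence issue arises; the identity holds coefficientwise.

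Then I would check $A_\infty$-linearity of $D\mapsto\overline{D}$ exactly as in Proposition \ref{der m}: Lemma \ref{An-A} supplies an $A_\infty$-module structure on the target via Theorem \ref{upd}, and the assignment respects multiplication by $F\in A_\infty$ on the second tensor factor by the same one-line verification used there. This establishes that $\overline{D}$ lies in $\Der_k^m(A,M\otimes_{A_\infty}B_\infty)$ and that the resulting map is a homomorphism of $A_\infty$-modules, which is exactly what the statement asks.

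The main obstacle, and the point deserving care, is purely one of well-definedness and bookkeeping at infinite order, not of substance: one must make sure the formal series $\sum_i D\circ d_i(a)\otimes t^i$ genuinely represents an element of $M\otimes_{A_\infty}B_\infty$ and that comparing $t^i$-coefficients is legitimate. Here I would invoke Remark \ref{Blim} to identify $B_\infty$ with $\varprojlim_n B_n^\infty$, so that an element of $M\otimes_{A_\infty}B_\infty$ is precisely a compatible system of its truncations modulo $\langle t^{n+1}\rangle$; under this identification $\overline{D}$ is the limit of the finite-level maps from Proposition \ref{der m}, and compatibility with the transition maps is immediate because truncating the series mod $\langle t^{n+1}\rangle$ simply discards the higher $d_i$. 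Thus the infinite-order homomorphism is obtained as $\varprojlim$ of the homomorphisms $\phi_n$, and the order-$m$ Leibniz identity, holding at every finite truncation, passes to the limit.
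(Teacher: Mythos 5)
Your construction and the paper's proof arrive at the same map, but by noticeably different presentations. The paper never writes the formula $\overline{D}(a)=\sum_{i=0}^{\infty}D(d_i(a))\otimes t^i$ at infinite level. Instead it restricts $D$ along the inclusions $A_n\hookrightarrow A_{\infty}$, applies the finite-level map of Proposition \ref{der m} followed by $B_n\hookrightarrow B_n^{\infty}$, organizes these into a morphism of inverse systems, passes to $\underset{n\in\N}{\varprojlim}$, and then translates back and forth through $\Hom(\Omega^{(m)},-)$ using Theorem \ref{upd}, Remark \ref{Blim} and equation (\ref{Omeinf}); your direct formula with a coefficientwise Leibniz check is the concrete shadow of that argument, and the algebraic verification does go through verbatim as you claim, since each $t^i$-coefficient only involves finitely many terms.

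The one step where you assert more than is true in general is the sentence ``an element of $M\otimes_{A_{\infty}}B_{\infty}$ is precisely a compatible system of its truncations.'' That is the claim $M\otimes_{A_{\infty}}B_{\infty}\simeq\underset{n\in\N}{\varprojlim}\big(M\otimes_{A_{\infty}}B_n^{\infty}\big)$, i.e.\ that tensoring with $M$ commutes with the inverse limit $B_{\infty}=\underset{n\in\N}{\varprojlim}B_n^{\infty}$. For an arbitrary $A_{\infty}$-module $M$ this fails: the natural map $M\otimes_{A_{\infty}}A_{\infty}[[t]]\to M[[t]]$ need be neither surjective nor injective when $M$ is not finitely presented, so your series $\sum_i D(d_i(a))\otimes t^i$ is a priori an element of $\underset{n\in\N}{\varprojlim}(M\otimes_{A_{\infty}}B_n^{\infty})$ rather than of $M\otimes_{A_{\infty}}B_{\infty}$, and even ``comparing $t^i$-coefficients'' presupposes this identification. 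To be fair, the paper's own proof leans on exactly the same commutation --- it is the step dispatched by the citation to \cite{KS} when passing from $\underset{n\in\N}{\varprojlim}\Hom(\Omega_{A/k}^{(m)},M\otimes_{A_{\infty}}B_n^{\infty})$ to $\Hom(\Omega_{A/k}^{(m)},M\otimes_{A_{\infty}}\underset{n\in\N}{\varprojlim}B_n^{\infty})$ --- so your argument is at the same level of rigor as the published one; but if you want the statement for all $M$ you should either prove this commutation in the case at hand or flag it explicitly as the point where the argument is incomplete.
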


\begin{proof}
Let $n,n'\in\N$. Since $A_n\hookrightarrow A_{n'}$ for all $n,n'\in\N$ such that $n\leq n'$, we have the following descending chain of homomorphism of $A$-modules:
\[\cdots\rightarrow \Der_k^m(A_n,M)\rightarrow \Der_k^m(A_{n-1},M)\rightarrow \cdots\rightarrow \Der_k^m(A_1,M)\rightarrow \Der_k^m(A_0,M).\] Hence we have an inverse system of $A$-modules $\{\Der_k^m(A_n,M)\}_{n\in\N}$.  We also have a natural ring homomorphism $\psi_n: B_n\hookrightarrow B_n^{\infty}$. Using $\psi_n$ we can define the $A$-homomorphism \[\begin{array}{ccccc}
\Der_k^m(A_{n},M)&\to& \Der_k^m(A,M\otimes_{A_n}B_n)&\to&\Der_k^m(A,M\otimes_{A_{\infty}}B_n^{\infty})\\
D&\mapsto &\overline{D}&\mapsto&    (Id_{M}\otimes\psi)\circ\overline{D}
\end{array}. \]
Moreover, using the natural inclusions $\theta_{n,n'}:B_n^{\infty}\hookrightarrow B_{n'}^{\infty}$ we can define a $A$-homomorphism $$\begin{array}{ccc}\Der_k^m(A,M\otimes_{A_{\infty}}B_n^{\infty})&\to&\Der_k^m(A,M\otimes_{A_{\infty}}B_{n'}^{\infty})\\
E&\mapsto&(Id_M\otimes\theta_{n,n'})\circ E\end{array}.$$ The previous homomorphisms induce the following commutative diagram of $A$-modules:
\[\begin{array}{ccccccc}
\cdots&\rightarrow&\Der_k^m(A_{n},M)&\rightarrow&\Der_k^m(A_{n-1},M)&\rightarrow&\cdots\\
 & & \downarrow \phi_n& & \downarrow \phi_{n-1}& & \\
\cdots&\to&\Der_k^m(A,M\otimes_{A_{\infty}}B_n^{\infty})&\to&\Der_k^m(A,M\otimes_{A_{\infty}}B_{n-1}^{\infty})&\to&\cdots
\end{array}.\] 
This diagram induces the $A$-homomorphism \[\begin{array}{ccc}
\overline{\phi_{\infty}}:\underset{n\in\N}{\varprojlim}\Der_k^m(A_{n},M)&\to&\underset{n\in\N}{\varprojlim}\Der_k^m(A,M\otimes_{A_{\infty}}B_n^{\infty})\\
(D_0,D_1,\ldots,D_n,\ldots)&\mapsto &(\overline{D_0},\overline{D_1},\ldots,\overline{D_n},\ldots)
\end{array}.\] Applying Theorem \ref{upd} we obtain
\[
\overline{\phi'_{\infty}}:\underset{n\in\N}{\varprojlim}\Hom(\Omega_{A_n/k}^{(m)},M)\to\underset{n\in\N}{\varprojlim}\Hom(\Omega_{A/k}^{(m)},M\otimes_{A_{\infty}}B_n^{\infty}).\]
By the commutative properties of inverse limits with $\Hom$ functor and tensor products (see \cite[Chapter 2]{KS}), we get
\[
\phi'_{\infty}:\Hom(\underset{n\in\N}{\varinjlim}\Omega_{A_n/k}^{(m)},M)\to\Hom(\Omega_{A/k}^{(m)},M\otimes_{A_{\infty}}\underset{n\in\N}{\varprojlim}B_n^{\infty}).\] 
Using Remark \ref{Blim} and equation (\ref{Omeinf}) results in 
\[
\phi'_{\infty}:\Hom(\Omega_{A_{\infty}/k}^{(m)},M)\to\Hom(\Omega_{A/k}^{(m)},M\otimes_{A_{\infty}}B_{\infty}).\]
Finally, by Theorem \ref{upd} we obtain a homomorphism of $A$-modules
\[\phi_{\infty}:\Der_k^m(A_{\infty},M)\to\Der_k^m(A,M\otimes_{A_{\infty}}B_{\infty}).\]
By Lemma \ref{An-A}, $\Der_k^m(A,M\otimes_{A_{\infty}}B_{\infty})$ is a $A_n$-module for all $n\in\N$. In particular, it is a $A_{\infty}$-module. On the other hand, by definition $\Der_k^m(A_{\infty},M)$ is a $A_{\infty}$-module. Thus, $\phi_{\infty}$ is a homomorphism of $A_{\infty}$-modules.
\end{proof}
\subsection{Proof of Theorem \ref{mT} $(2)$}

\begin{pro}\label{der sur}
Let $n\in\N$, $m\in\N_{\geq1}$. Let $\K$ be a field of characteristic zero and $A=\K[x_1,\ldots,x_s]/I$. 
In this case, the map $\phi_n$ of Proposition \ref{der m} is surjective. 
\end{pro}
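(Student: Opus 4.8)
The plan is to reduce to a polynomial presentation, settle the polynomial case explicitly with divided-power derivatives (this is where characteristic zero enters), and then descend along $A=\K[x_1,\dots,x_s]/I$.

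I would first record two harmless reformulations of $\phi_n$. Identifying $M\otimes_{A_n}B_n\cong\bigoplus_{i=0}^n Mt^i$, the equation $\phi_n(D)=\overline D=E$ is equivalent, coefficient by coefficient in $t$, to $D(d_i(a))=E_i(a)$ for all $a\in A$ and all $i$, where $E(a)=\sum_i E_i(a)t^i$. Equivalently $\overline D=\widetilde D\circ\gamma_n^{\#}$, where $\widetilde D\colon B_n\to M\otimes_{A_n}B_n$ is the $t$-linear extension of $D$; this second form is what makes both the polynomial computation and the later descent transparent.

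\emph{Polynomial case.} Let $P=\K[x_1,\dots,x_s]$, so that $P_n:=HS^n_{P/\K}=\K[x_j^{(i)}]$ is again a polynomial ring. Over a field of characteristic zero, Theorem \ref{upd} together with the freeness of $\Omega^{(m)}_{P_n/\K}$ identifies an element of $\Der^m_\K(P_n,M)$ with a divided-power operator $D=\sum_{1\le|\beta|\le m}e_\beta\,\partial_X^{\beta}$ in the variables $X=(x_j^{(i)})$, with coefficients $e_\beta\in M$. Using $\gamma_n^{\#}(x_j)=\sum_l x_j^{(l)}t^l$ and the chain rule for this affine substitution, I obtain the closed form
\[
\overline D(a)=\sum_{1\le|\beta|\le m}\Big(\prod_j\binom{b_j}{\beta_j}\Big)\,\gamma_n^{\#}\!\big(\partial_x^{\mathbf b}a\big)\,e_\beta\,t^{w(\beta)},
\]
where $b_j=\sum_l\beta_{j,l}$, $\mathbf b=(b_1,\dots,b_s)$, $w(\beta)=\sum_{j,l}l\,\beta_{j,l}$, and $\partial_x^{\mathbf b}$ is the divided-power derivative in the $x_j$. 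Matching this against $E(a)=\sum_{1\le|\mathbf b|\le m}c_{\mathbf b}\,\gamma_n^{\#}(\partial_x^{\mathbf b}a)$, with $c_{\mathbf b}=\sum_i c_{\mathbf b,i}t^i$, turns $\overline D=E$ into, for each multidegree $\mathbf b$ and each $i\le n$, the single linear equation expressing $c_{\mathbf b,i}$ as $\sum_\beta\prod_j\binom{b_j}{\beta_j}e_\beta$, the sum over all $\beta$ of multidegree $\mathbf b$ and weight $w(\beta)=i$. Since the multinomial coefficients are nonzero in characteristic zero, this system is solvable (and genuinely underdetermined once $m\ge2$) — for instance by routing each $c_{\mathbf b,i}$ into one distinguished $\beta$ — so $\phi_n$ is surjective when $A=P$.

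\emph{Descent and the main obstacle.} For $A=P/I$, let $q\colon P\to A$ and $\pi\colon P_n\to A_n$ be the canonical surjections, compatible with the universal derivations ($\pi\circ d_i^{P}=d_i^{A}\circ q$), so $A_n=P_n/J$ with $J=\ker\pi$. Precomposition with $q$ and with $\pi$ gives injective restriction maps assembling into a commutative square relating $\phi_n^{P}$ and $\phi_n^{A}$. Given $E\in\Der^m_\K(A,M\otimes_{A_n}B_n)$, the map $E\circ q$ is an order-$m$ derivation on $P$, which by the polynomial case has a preimage $\widetilde D\in\Der^m_\K(P_n,M)$ under $\phi_n^{P}$. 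A diagram chase reduces the whole proposition to one point: it suffices to find a preimage that descends to $A_n$, i.e. with $\widetilde D(J)=0$, since injectivity of restriction along $q$ then gives $\phi_n^{A}(D)=E$. From $\overline{\widetilde D}=E\circ q$ one gets for free that $\widetilde D$ kills the prolongations $d_i^{P}(g)$ ($g\in I$), which generate $J$ as an ideal, and more generally that $\widetilde D\big(\sum_{p+r=i}f^{(p)}g^{(r)}\big)=0$ for all $f\in P$. The difficulty — and the step I expect to be the crux — is that for $m>1$ an order-$m$ derivation killing a generating set of $J$ need not kill $J$, because $\widetilde D(h\,g^{(r)})$ brings in the higher divided-power derivatives $\partial^{\beta''}(g^{(r)})$ of the generators. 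To close this I would exploit the genuine freedom in the choice of $\widetilde D$ — the fibre of $\phi_n^{P}$ over $E\circ q$ is a coset of $\ker\phi_n^{P}$, which already contains every operator $\partial_X^{\beta}$ with $w(\beta)>n$ and is therefore nonzero for $m\ge2$ (compatibly with Theorem \ref{mT}(3)) — and track how the hypothesis that $E$ is well defined on $P/I$, i.e. the relations cutting $\Omega^{(m)}_{A/\K}$ out of $\Omega^{(m)}_{P/\K}$ coming from $I$, forces the coefficients $e_\beta$, chosen from the data of $E$, to make the obstruction terms $e_\beta\,\partial^{\beta''}(g^{(r)})$ cancel, yielding $\widetilde D(J)=0$. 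Verifying this compatibility, rather than the bookkeeping of the polynomial step, is where the real work lies. Finally, only finite $n$ is at issue here; the case $n=\infty$ is handled separately through the inverse-limit construction of Proposition \ref{der infi}.
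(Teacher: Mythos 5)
Your treatment of the polynomial case is correct and is essentially the paper's own argument: in characteristic zero an order-$m$ derivation of $A_n$ is prescribed by its values on the monomials $\underline{x}^{\alpha}$ with $1\le|\alpha|\le m$ (Remark \ref{xmono}), and the paper routes each target value $E_j(x^{\beta})$ into a single distinguished monomial of $\Delta_i^j$ (one positive-order variable times order-zero variables), the normalizing factor $\frac{1}{\alpha_i^0+1}$ being exactly the inverse of the binomial coefficient that your chain-rule computation for the substitution $x_j\mapsto\sum_l x_j^{(l)}t^l$ produces. You solve the same linear system in the coefficients $e_\beta$ a bit more abstractly, but the two constructions coincide in substance, including the observation that the system is underdetermined for $m\ge2$.

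The gap is the descent step, which you correctly isolate but do not carry out. For $A=P/I$ with $I\ne 0$ your construction produces an order-$m$ derivation $\widetilde D$ of the polynomial ring $P_n$, and the identity $\overline{\widetilde D}=E\circ q$ (with $q\colon P\to A$ the quotient map) only gives $\widetilde D(g^{(i)})=0$ on the generators of $J=\ker(P_n\to A_n)$. For $m\ge 2$ an order-$m$ derivation killing a generating set of an ideal need not kill the ideal (already $\partial_u\partial_v$ kills $u$ but not $uv$), so $\widetilde D(J)=0$ is a genuine further condition on the chosen coefficients, one that has to be extracted from the fact that $E$ is defined on $A$ and not merely on $P$; your proposal names this as ``where the real work lies'' and then stops, so as written it establishes surjectivity only for $A$ a polynomial ring. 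It is worth noting that the paper's proof does not make this step explicit either: it defines $D$ directly by its values on $\Delta$ and verifies $\overline D(x^{\beta})=E(x^{\beta})$ using only polynomial-ring identities, leaving the compatibility of those prescribed values with the relations defining $A_n$ unchecked (in small examples the offending terms happen to carry coefficients lying in $J$ and hence annihilate $M$, but the general verification is precisely the cancellation you defer). So you have located a real issue rather than invented one, but neither your argument nor the sketched repair closes it.
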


Let us introduce some notation that will be used in the proof of this result. Let $\alpha=(\alpha^0_1,\ldots,\alpha^n_1,\ldots,\alpha^0_s,\ldots,\alpha^n_s)\in \N^{s(n+1)}$ and $\beta=(\beta_1\ldots,\beta_s)\in\N^{s}$.
\begin{itemize}
\item $\underline{x}^{\alpha}:=(x_1^{(0)})^{\alpha^0_1}(x_1^{(1)})^{\alpha^1_1} \cdots(x_1^{(n)})^{\alpha^n_1}\cdots (x_i^{(j)})^{\alpha_i^j}\cdots(x_s^{(0)})^{\alpha^0_s}\cdots(x_s^{(n)})^{\alpha^n_s}.$

\item $x^{\beta}:=x_1^{\beta_1}\cdots x_s^{\beta_s}.$

\item $|\alpha|:=\sum_{i=1}^s\sum_{j=0}^n\alpha_i^j$ and $|\beta|:=\sum_{i=1}^s\beta_i.$

\item Set $\Delta:=\{\underline{x}^{\alpha}| \alpha\in\N^{s(n+1)} \textnormal{ and }1\leq |\alpha|\leq m\}.$ For all $i\in\{1,\ldots,s\}$ and $j\in\{1,\ldots,n\}$, we define the following subsets of $\Delta$:
\begin{align}
\Delta^0&:=\left\{\underline{x}^{\alpha}=(x_1^{(0)})^{\alpha^0_1}(x_2^{(0)})^{\alpha^0_2}\cdots(x_s^{(0)})^{\alpha^0_s}|\underline{x}^{\alpha}\in\Delta\right\}.\notag\\
\Delta_i^{j}&:=\left\{\underline{x}^{\alpha}=x_i^{(j)}(x_i^{(0)})^{\alpha^0_i}(x_{i+1}^{(0)})^{\alpha^0_{i+1}}\cdots(x_s^{(0)})^{\alpha^0_s}|\underline{x}^{\alpha}\in\Delta\right\}.\notag
\end{align}

\item Let $\gamma=(\gamma_1,\ldots,\gamma_{|\beta|})\in\N^{|\beta|}.$ We set $$x^{(\gamma)}:=x_1^{(\gamma_1)}\cdots x_1^{(\gamma_{\beta_1})}x_2^{(\gamma_{\beta_1+1})}\cdots x_2^{(\gamma_{\beta_1+\beta_2})}\cdots x_s^{(\gamma_{\beta1+\cdots+\beta_{s-1}+1})}\cdots x_s^{(\gamma_{|\beta|})}$$ 
(since there are no conditions on $\gamma_l$, we see $x^{(\gamma)}$ as an element of $k[x_1,\ldots,x_s]_{\infty}$).

\item Let $j\in\{0,1\ldots,n\}.$ We denote \[\Gamma^j:=\{\gamma\in\N^{|\beta|}|\mbox{  }|\gamma|=j,\gamma_l\neq j \textnormal{ for all } l\in\{1,\ldots,|\beta|\}\}.\]
\end{itemize}

\begin{lem}\label{g-a}Let $j\in\{0,1\ldots,n\}$ and $\gamma\in\Gamma^j$. Then there exists $\alpha_{\gamma}\in\N^{s(n+1)}$ such that $x^{(\gamma)}=\underline{x}^{\alpha_{\gamma}}$ and $|\beta|=|\alpha_\gamma|.$
\end{lem}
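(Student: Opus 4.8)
The plan is to treat this as a direct bookkeeping statement: the element $x^{(\gamma)}$ is already exhibited as a product of generators of $A_n$ of the shape $x_i^{(c)}$, so passing to the normal form $\underline{x}^{\alpha_\gamma}$ is just a matter of collecting equal factors and recording their multiplicities in an exponent vector $\alpha_\gamma$. No structural input is needed beyond the explicit definition of $x^{(\gamma)}$ and of $\Gamma^j$.

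First I would fix the combinatorial indexing underlying the definition of $x^{(\gamma)}$. Partition $\{1,\ldots,|\beta|\}$ into consecutive blocks $B_1,\ldots,B_s$ with $B_i=\{\beta_1+\cdots+\beta_{i-1}+1,\ldots,\beta_1+\cdots+\beta_i\}$, so that $|B_i|=\beta_i$. By the very definition of $x^{(\gamma)}$, its $l$-th factor is $x_i^{(\gamma_l)}$ exactly when $l\in B_i$, whence $x^{(\gamma)}=\prod_{i=1}^s\prod_{l\in B_i}x_i^{(\gamma_l)}$.

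Next I would define the exponent vector by counting multiplicities: for $i\in\{1,\ldots,s\}$ and $j'\in\{0,\ldots,n\}$ set $(\alpha_\gamma)_i^{j'}:=\#\{l\in B_i : \gamma_l=j'\}$. Before proceeding I must check that $\alpha_\gamma$ is a legitimate element of $\N^{s(n+1)}$, i.e. that every superscript $\gamma_l$ lies in $\{0,\ldots,n\}$; this holds because $0\le\gamma_l\le|\gamma|=j\le n$, so no factor $x_i^{(\gamma_l)}$ has superscript exceeding $n$. (Note that the extra constraint $\gamma_l\ne j$ in the definition of $\Gamma^j$ plays no role for this lemma; only $|\gamma|=j\le n$ is used.) Grouping the factors of $x^{(\gamma)}$ within each block according to their superscript value then yields $x^{(\gamma)}=\prod_{i=1}^s\prod_{j'=0}^n\big(x_i^{(j')}\big)^{(\alpha_\gamma)_i^{j'}}=\underline{x}^{\alpha_\gamma}$, which is the first assertion.

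Finally, for the degree equality I would simply count: since $B_1,\ldots,B_s$ partition $\{1,\ldots,|\beta|\}$, summing the multiplicities over all superscripts in each block recovers its size, so $|\alpha_\gamma|=\sum_{i=1}^s\sum_{j'=0}^n(\alpha_\gamma)_i^{j'}=\sum_{i=1}^s|B_i|=\sum_{i=1}^s\beta_i=|\beta|$. There is no substantive obstacle in this argument; the only points requiring care are the well-definedness check that the superscripts are bounded by $n$, and keeping the block and collecting indices consistent. I would present the collecting-of-like-terms step as a clean regrouping of the product rather than expanding everything out by hand.
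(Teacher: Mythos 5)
Your proof is correct and follows essentially the same route as the paper's: both define $\alpha_\gamma$ by counting, within each consecutive block of $\{1,\ldots,|\beta|\}$ of length $\beta_i$, how many indices $l$ have $\gamma_l$ equal to a given superscript, and then read off $|\alpha_\gamma|=|\beta|$ because $x^{(\gamma)}$ is a product of exactly $|\beta|$ factors. Your added checks (that every $\gamma_l\le j\le n$ so the exponent vector really lives in $\N^{s(n+1)}$, and that the condition $\gamma_l\neq j$ is not needed here) are correct refinements the paper leaves implicit.
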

\begin{proof}
For the given $\gamma\in\Gamma_j$, we define the vector $\alpha_{\gamma}=(\alpha_i^j)_{\substack{1\leq i\leq s\\0\leq j\leq n}}\in\N^{s(n+1)},$ where  \begin{align}\alpha_1^j&=|\{l\in\{1,\ldots,\beta_1\}|\gamma_l=j\}|,\notag\\
\alpha_i^j&=|\{l\in\{\beta_1+\cdots+\beta_{i-1}+1,\ldots,\beta_1+\cdots+\beta_i\}|\gamma_l=j\}|, \textnormal{ for } i\geq 2.\notag\end{align}

By construction, we have that $x^{(\gamma)}=\underline{x}^{\alpha_{\gamma}}$. In addition, since $x^{(\gamma)}$ is a product of $|\beta|$ terms then $|\alpha_{\gamma}|=|\beta|.$
\end{proof}

\begin{rem}\label{xbeta}
Let $\beta\in\N^s.$ By iterating the Leibniz rule of Hasse-Schmidt derivations we obtain:
\begin{align}
(x^{\beta})^{(j)}&=\sum_{\substack{\gamma\in\N^{|\beta|}\\|\gamma|=j}}x^{(\gamma)}=\beta_1x_1^{(j)}(x_1^{(0)})^{\beta_1-1}(x_2^{(0)})^{\beta_2}\cdots(x_s^{(0)})^{\beta_s}+\sum_{\substack{\gamma\in\N^{|\beta|}\\|\gamma|=j\\ \gamma_l\neq j, \mbox{  } l=1,\ldots,\beta_1}}x^{(\gamma)}\notag\\&=\beta_1x_1^{(j)}(x_1^{(0)})^{\beta_1-1}(x_2^{(0)})^{\beta_2}\cdots(x_s^{(0)})^{\beta_s}+\beta_2x_2^{(j)}(x_1^{(0)})^{\beta_1}(x_2^{(0)})^{\beta_2-1}\cdots(x_s^{(0)})^{\beta_s}\notag\\&+\sum_{\substack{\gamma\in\N^{|\beta|}\\|\gamma|=j\\ \gamma_l\neq j,\mbox{  } l=1,\ldots, \beta_1+\beta_2}}x^{(\gamma)}.\notag
\end{align}
Iterating this process we obtain:
\[(x^{\beta})^{(j)}=\sum_{i=1}^s\beta_ix_i^{(j)}(x_1^{(0)})^{\beta_1}\cdots(x_i^{(0)})^{\beta_i-1}\cdots(x_s^{(0)})^{\beta_s}+\sum_{\gamma\in\Gamma^j}x^{(\gamma)}.\]
\end{rem}

\begin{rem}\label{xmono}
Let $B=k[y_1,\ldots,y_t]$ and $M$ be a $B$-module. By iterating the product rule of derivatives of order $m$ (\ref{ruleN}), we deduce that any derivation $D \in \Der_{B/k}^m(B,M)$ is determined by the monomials $x^{\tau}$ with $\tau \in \N^t$ such that $1 \leq |\tau| \leq m.$
\end{rem}

In the following example we illustrate the notation we introduced before as well as the main ideas of the proof of Proposition \ref{der sur}.  

\begin{exam}
Let $\K$ be a field of characteristic $0$. Set $A=\K[x_1,x_2]$, thus $A_1=\K[x_1^{(0)},x_1^{(1)},x_2^{(0)},x_2^{(1)}].$ Consider the map $\phi_1$ of Proposition \ref{der m}, \[\phi_1:\Der_{\K}^2(A_1,A_1)\to\Der_{\K}^2(A,A_1\otimes_{A_1}B_1).\]
Let $\Delta=\{\underline{x}^{\alpha}\mbox{}|\mbox{} \alpha\in\N^4 \mbox{} , \mbox{} 1\leq|\alpha|\leq 2\}.$ Notice that \begin{align} \Delta^{0}&=\{x_1^{(0)},x_2^{(0)},(x_1^{(0)})^2,x_1^{(0)}x_2^{(0)},(x_2^{(0)})^2\},\notag\\\Delta_{1}^1&=\{x_1^{(1)},x_1^{(0)}x_1^{(1)},x_1^{(1)}x_2^{(0)}\},\notag\\\Delta_{2}^1&=\{x_2^{(1)},x_2^{(1)}x_2^{(0)}\}.\notag \end{align}
Let $E\in\Der_{\K}^2(A,A_1\otimes_{A_1}B_1).$ Then $E=E_0\otimes1+E_1\otimes t,$ where $E_j\in\Der^m_{\K}(A,A_1)$ for $j=0,1.$ We proceed to define $D\in\Der_{\K}^2(A_1,A_1)$ such that $\phi_1(D)=E$. By Remark \ref{xmono} it is enough to define $D$ in the monomials $\underline{x}^\alpha$, with $\alpha\in\Delta$:
\[D(\underline{x}^{\alpha})=\left\{\begin{array}{lll}E_0(x_1^{\alpha_1^0}x_2^{\alpha_2^0})&\textnormal{ if } &\underline{x}^{\alpha}\in\Delta^0\\ \frac{1}{\alpha_{1}^0+1}E_{1}\left(x_{1}^{\alpha_{1}^0+1}x_{2}^{\alpha_{2}^0}\right)&\textnormal{ if }&\underline{x}^{\alpha}\in\Delta_1^{1}\\ \frac{1}{\alpha_{2}^0+1}E_{1}\left(x_{2}^{\alpha_{2}^0+1}\right)&\textnormal{ if }&\underline{x}^{\alpha}\in\Delta_2^{1}\\0 &\textnormal{ if }& \underline{x}^{\alpha}\in\Delta\setminus\Delta^{0}\cup\Delta_1^1\cup\Delta_2^1
\end{array}\right..\]

Now, we prove that $\overline{D}=E$. By Remark \ref{xmono}, it is enough to check this equality on the monomials $x^{\beta}$ for $\beta\in\N^2,$ such that $1\leq|\beta|\leq2.$ Note that
\begin{align}
\overline{D}(x_1)&=D\circ d_0(x_1)\otimes1+D\circ d_1(x_1)\otimes t\notag\\
&=D(x_1^{(0)})\otimes1+D(x_1^{(1)})\otimes t\notag\\
&=E_0(x_1)\otimes1+E_1(x_1)\otimes t=E(x_1).\notag\\\ \notag\\
\overline{D}((x_1)^2)&=D\circ d_0((x_1)^2)\otimes1+D\circ d_1((x_1)^2)\otimes t\notag\\
&=D((x_1^{(0)})^2)\otimes1+2D(x_1^{(0)}x_1^{(1)})\otimes t\notag\\
&=E_0((x_1)^2)\otimes1+1/2(2E_1((x_1)^2))\otimes t=E((x_1)^2).\notag\\ \notag\\
\overline{D}(x_1x_2)&=D\circ d_0(x_1x_2)\otimes1+D\circ d_1(x_1x_2)\otimes t\notag\\
&=D(x_1^{(0)}x_2^{(0)})\otimes1+D(x_1^{(0)}x_2^{(1)}+x_1^{(1)}x_2^{(0)})\otimes t\notag\\
&=E_0(x_1x_2)\otimes1+E_1(x_1x_2)\otimes t=E(x_1x_2).\notag
\end{align}
Similar computations show that $\overline{D}(x_2)=E(x_2)$ and $\overline{D}((x_2)^2)=E((x_2)^2)$. Therefore $\phi_1(D)=\overline{D}=E.$
\end{exam}

\begin{proof}{(of Proposition \ref{der sur})} Let $E=\sum_{j=0}^nE_j\otimes t^j\in\Der_k^m(A,M\otimes B_n).$ By Remark \ref{xmono}, it is enough to find a derivation $D\in\Der_k^m(A_n,M)$ such that $\overline{D}(x^{\beta})=E(x^{\beta})$, for all monomials $x^{\beta}$ such that $1\leq |\beta|\leq m$. 
We define a derivation $D:A_n\to M$ by  determining its values in $\underline{x}^{\alpha}\in\Delta$ as follows:

\[D(\underline{x}^{\alpha})=\left\{\begin{array}{lll}E_0(x_1^{\alpha_1^0}\cdots x_s^{\alpha_s^0})&\textnormal{ if } &\underline{x}^{\alpha}\in\Delta^0\\ \frac{1}{\alpha_{i}^0+1}E_{j}\left(x_{i}^{\alpha_{i}^0+1}x_{i+1}^{\alpha_{i+1}^0}\cdots x_{s}^{\alpha_{s}^0}\right)&\textnormal{ if }&\underline{x}^{\alpha}\in\Delta_i^{j}\textnormal{ for some } j\in\{1,\ldots,n\}\\0 &\textnormal{ if }& \underline{x}^{\alpha}\notin\Delta_i^{j} \textnormal{ for all } j\in\{0,\ldots,n\}
\end{array}\right..\]
Let $\beta\in\N^s$ such that $1\leq |\beta|\leq m$ and $D$ as before. By definition of $\overline{D}$ and applying Remark \ref{xbeta} we obtain 

\begin{align}
\overline{D}(x^{\beta})&=\sum_{j=0}^n\big(D\circ d_j\big)(x^{\beta})\otimes t^j=\sum_{j=0}^nD\big((x^{\beta})^{(j)}\big)\otimes t^j\notag\\&=D((x^{\beta})^{(0)})\otimes 1+\sum_{j=1}^n\left[D((x^{\beta})^{(j)})\right]\otimes t^j\notag\\&=D\left((x_1^{(0)})^{\beta_1}(x_2^{(0)})^{\beta_2}\cdots (x_s^{(0)})^{\beta_s}\right)\otimes 1\notag\\&+\sum_{j=1}^n\left[D\left(\sum_{i=1}^s\beta_ix_i^{(j)}(x_1^{(0)})^{\beta_1}\cdots (x_i^{(0)})^{\beta_i-1}\cdots (x_s^{(0)})^{\beta_s}   \right)+\sum_{\gamma\in\Gamma^j}D(x^{(\gamma)})\right]\otimes t^j\notag
\end{align}

Firstly, for $j=0$ and $(x_1^{(0)})^{\beta_1}(x_2^{(0)})^{\beta_2}\cdots (x_s^{(0)})^{\beta_s}\in\Delta^0$, we have that\[D\left((x_1^{(0)})^{\beta_1}(x_2^{(0)})^{\beta_2}\cdots (x_s^{(0)})^{\beta_s}\right)=E_0(x_1^{\beta_1}\cdots x_s^{\beta_s})=E_0(x^{\beta}).\]

 Consider $j\geq1$. We observe:
\begin{itemize}

\item For $x_{i}^{(j)}(x_{i}^{(0)})^{\beta_{i}}\cdots (x_s^{(0)})^{\beta_s}\in\Delta_i^j$, \[D\left(\beta_{i}x_{i}^{(j)}(x_{i}^{(0)})^{\beta_{i}-1}\cdots (x_s^{(0)})^{\beta_s}\right) =\beta_{i}\frac{1}{\beta_{i}}E_j(x_{i}^{\beta_{i}-1+1}\cdots x_s^{\beta_s})=E_j(x^{\beta}).\]

\item For each $\beta\in\N^s$, we define $i_1=\min\{i|\beta_i\neq0, i\in\{1,\ldots,s\}\}.$ For $i>i_1$ we have that $D\left(\beta_{i}x_{i}^{(j)}(x_{i_1}^{(0)})^{\beta_{i_1}}\cdots(x_{i}^{(0)})^{\beta_{i}-1}\cdots (x_s^{(0)})^{\beta_s}\right)=0$.

\item By Lemma \ref{g-a}, for $\gamma\in\Gamma^j$ there exists $\alpha_{\gamma}\in\N^{s(n+1)}$ such that $x^{(\gamma)}=\underline{x}^{\alpha_{\gamma}}$ and $|\alpha_\gamma|=|\beta|.$ Since $1\leq|\beta|\leq m$ then $\underline{x}^{\alpha_{\gamma}}\in\Delta.$ In addition, since $\gamma_l\neq j$ for all $l\in\{1,\ldots,|\beta|\}$ we have that $\underline{x}^{\alpha_{\gamma}}\not\in\Delta_i^j$ for all $j\in\{0,\ldots,n\}.$ Then for $\gamma\in\Gamma^j$ we have $D(x^{(\gamma)})=D(\underline{x}^{\alpha_{\gamma}})=0.$
\end{itemize}

By the three previous items, we conclude that
\begin{align}
D\left((x^{\beta})^{(j)}\right)&=D\left(\beta_{i_1}x_{i_1}^{(j)}(x_{i_1}^{(0)})^{\beta_{i_1}-1}\cdots (x_s^{(0)})^{\beta_s}\right)\notag\\ &+\sum_{i>i_1}^sD\left(\beta_{i}x_{i}^{(j)}(x_{i}^{(0)})^{\beta_{i}-1}\cdots (x_s^{(0)})^{\beta_s}\right)+\sum_{\gamma\in\Gamma^j}D(x^{\gamma})\notag\\&=E_j(x^{\beta}).\notag
\end{align}

Consequently
\begin{align}
\overline{D}\left(x^{\beta}\right)=\sum_{j=0}^nD\big((x^{\beta})^{(j)}\big)\otimes t^j=\sum_{j=0}^nE_j\big(x^{\beta}\big)\otimes t^j=E\left(x^{\beta}\right)\notag.
\end{align}

\end{proof}
\subsection{Proof of Theorem \ref{mT} $(3)$}
Throughout this section, $\K$ denotes a field of characteristic zero. The following example shows that for some ring $A$ the map $\phi_n$ is not injective, for any $m\geq2$ and $n\geq1$.
\begin{exam}\label{notiny}
Let $A=\K[x]$. Thus, $A_n=\K[x_0,x_1,x_2,\ldots,x_n]$. Let $M=A_n$ and consider $D=\frac{1}{m!}\frac{\partial^m }{\partial x_n^m}\in\Der^m_{k}(A_n,A_n)$. Let us show that $\overline{D}(f)=0$ for all $f\in A$. Indeed, since $D$ is additive it is enough to show that $D((x^l)^{(i)})=0$ for all $l\in\N$ and $i\in\{0,\ldots,n\}$. This is true if $i<n$ since no monomial in $(x^l)^{(i)}$ contains the variable $x_n$. For $i=n$, a straightforward induction on $l$ using the Leibniz rule on $(x^l)^{(n)}$ gives $D((x^l)^{(n)})=0$, for $m\geq2$.
\end{exam}

The previous example only holds for the partial derivative with respect to $x_n$, that is, the ``last" variable of $A_n$. This may lead to believe that in $A_{\infty}$ the map $\phi_{\infty}$ is injective. Unfortunately, it is not the case, as we show in what follows.

\begin{lem}\label{B2}
Let $A=\K[x_1,\ldots,x_s]$ and let $x^{\beta}$ be a monomial in $A.$ We have that \[d_A^2(x^{\beta})=\sum_{\substack{\alpha\in\N^{s}\\|\alpha|=2}}\frac{1}{\alpha!}\frac{\partial^{\alpha}}{\partial x^{\alpha}}(x^{\beta})d_A^2(x^{\alpha})+(2-|\beta|)\sum_{\substack{\alpha\in\N^{s}\\|\alpha|=1}}\frac{1}{\alpha!}\frac{\partial^{\alpha}}{\partial x^{\alpha}}(x^{\beta})d_A^2(x^{\alpha}).\]
\end{lem}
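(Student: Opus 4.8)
The plan is to compute $d_A^2(x^\beta)$ directly inside $\Omega^{(2)}_{A/\K}=I_A/I_A^3$, exploiting the explicit presentation of $I_A$ for a polynomial ring; this is cleaner than iterating the order-two product rule. Recall that $I_A=(u_1,\ldots,u_s)$ with $u_i:=1\otimes x_i-x_i\otimes 1$, so $u^\alpha:=u_1^{\alpha_1}\cdots u_s^{\alpha_s}\in I_A^{|\alpha|}$ and $d_A^2(x_i)=u_i+I_A^3$. Writing $\bar x_i:=x_i\otimes 1$ and using that $a\mapsto 1\otimes a$ is a ring homomorphism with $1\otimes x_i=\bar x_i+u_i$, the binomial theorem gives
\[1\otimes x^\beta=\prod_{i=1}^s(\bar x_i+u_i)^{\beta_i}=\sum_{\alpha\leq\beta}\binom{\beta}{\alpha}\bar x^{\beta-\alpha}u^\alpha,\qquad \binom{\beta}{\alpha}:=\prod_i\binom{\beta_i}{\alpha_i}.\]
Subtracting $x^\beta\otimes 1=\bar x^\beta$ (the $\alpha=0$ term) and reducing modulo $I_A^3$ kills every $u^\alpha$ with $|\alpha|\geq 3$. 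Remembering that the $A$-module structure on $\Omega^{(2)}_{A/\K}$ is by left multiplication and that $\binom{\beta}{\alpha}x^{\beta-\alpha}=\frac{1}{\alpha!}\frac{\partial^\alpha}{\partial x^\alpha}(x^\beta)$, this yields
\[d_A^2(x^\beta)=\sum_{|\alpha|=1}\frac{1}{\alpha!}\frac{\partial^\alpha}{\partial x^\alpha}(x^\beta)\,(u^\alpha+I_A^3)+\sum_{|\alpha|=2}\frac{1}{\alpha!}\frac{\partial^\alpha}{\partial x^\alpha}(x^\beta)\,(u^\alpha+I_A^3).\]

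Next I would rewrite the classes $u^\alpha+I_A^3$ in terms of the $d_A^2(x^\alpha)$. For $|\alpha|=1$ this is immediate, namely $u^\alpha+I_A^3=d_A^2(x^\alpha)$. For $|\alpha|=2$, applying the displayed formula with $\beta$ replaced by $\alpha$ (where the unique degree-two term is $u^\alpha$ itself) gives $u^\alpha+I_A^3=d_A^2(x^\alpha)-\sum_{|\alpha'|=1}\binom{\alpha}{\alpha'}x^{\alpha-\alpha'}d_A^2(x^{\alpha'})$. Substituting this back, the degree-two monomials reproduce verbatim the first sum in the statement, while the degree-one monomials acquire a correction coming from the $|\alpha|=2$ part.

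The factor $(2-|\beta|)$ is then produced by collecting, among the first-order terms, the coefficient of $d_A^2(x_i)$, which equals $x^{\beta-e_i}\big(\beta_i-\sum_{|\alpha|=2}\binom{\beta}{\alpha}\alpha_i\big)$ (here $e_i$ is the $i$-th standard basis vector of $\N^s$). Thus the whole argument reduces to the single binomial identity $\sum_{|\alpha|=2}\binom{\beta}{\alpha}\alpha_i=\beta_i(|\beta|-1)$, which I expect to be the only genuine obstacle. It is cleanest to establish it by substituting $z_j=w$ for every $j$ into $z_i\frac{\partial}{\partial z_i}\prod_j(1+z_j)^{\beta_j}=z_i\beta_i(1+z_i)^{\beta_i-1}\prod_{j\neq i}(1+z_j)^{\beta_j}$, which collapses to $w\beta_i(1+w)^{|\beta|-1}$, and reading off the coefficient of $w^2$. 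With this identity the coefficient of $d_A^2(x_i)$ becomes $\beta_i(2-|\beta|)x^{\beta-e_i}=(2-|\beta|)\frac{\partial}{\partial x_i}(x^\beta)$, which is precisely the $i$-th term of the second sum, completing the proof.
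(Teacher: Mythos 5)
Your proof is correct and follows essentially the same route as the paper: both start from the Taylor-type expansion $d_A^2(x^{\beta})=\sum_{1\leq|\alpha|\leq 2}\frac{1}{\alpha!}\frac{\partial^{\alpha}}{\partial x^{\alpha}}(x^{\beta})\,(d_A^2(x))^{\alpha}$, rewrite the degree-two products $(d_A^2(x))^{\alpha}$ as $d_A^2(x^{\alpha})$ minus first-order corrections, and collect the coefficient of each $d_A^2(x_i)$ via the identity $\sum_{|\alpha|=2}\binom{\beta}{\alpha}\alpha_i=\beta_i(|\beta|-1)$. The only differences are cosmetic: you derive the initial expansion from the presentation $I_A=(u_1,\ldots,u_s)$ and the binomial theorem where the paper cites it from Nakai, and you verify the binomial identity by a generating function where the paper reindexes it as $\sum_{|\alpha'|=1}\binom{\beta-e_i}{\alpha'}=|\beta|-1$.
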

\begin{proof}
Firstly,  let $e_1,\ldots,e_s$ be the canonical basis of $\N^s$. Denote $\Delta_{\alpha}(x^{\beta})=\frac{1}{\alpha!}\frac{\partial^{\alpha}}{\partial x^{\alpha}}(x^{\beta})$ and $(d_A^2(x))^{\alpha}=d_A^2(x_1)^{\alpha_1}\cdots d_A^2(x_s)^{\alpha_s}$.  By \cite[Chapter II-2]{N}, we have that $$d_A^2(x^\beta) =\sum_{\substack{\alpha\in\N^{s}\\1\leq|\alpha|\leq2}}\Delta_{\alpha}(x^{\beta})(d_A^2(x))^{\alpha}.$$ In particular, if $|\beta|=2$, we deduce that 
$$(d_A^2(x))^{\beta}=d_A^2(x^{\beta})-\sum_{i=1}^s\Delta_{e_i}(x^{\beta})d^2_A(x_i).$$

Now let $x^{\beta}\in A$. We observe that:
\begin{align}
d_A^2(x^{\beta})&=\sum_{\substack{\alpha\in\N^{s}\\1\leq|\alpha|\leq2}}\Delta_{\alpha}(x^{\beta})(d_A^2(x))^{\alpha}\notag\\&=\sum_{\substack{\alpha\in\N^{s}\\|\alpha|=2}}\Delta_{\alpha}(x^{\beta})\Big(d_A^2(x^{\alpha})-\sum_{i=1}^s\Delta_{e_i}(x^{\alpha})d_A^2(x_i)\Big)\notag\\&+\sum_{\substack{\alpha\in\N^{s}\\|\alpha|=1}}\Delta_{\alpha}(x^{\beta})(d_A^2(x))^{\alpha}\notag\\&=\sum_{\substack{\alpha\in\N^{s}\\|\alpha|=2}}\Delta_{\alpha}(x^{\beta})d_A^2(x^{\alpha})-\sum_{\substack{\alpha\in\N^{s}\\|\alpha|=2}}\sum_{i=1}^s\Delta_{\alpha}(x^{\beta})\Delta_{e_i}(x^{\alpha})d_A^2(x_i)\notag\\&+\sum_{i=1}^s\Delta_{e_i}(x^{\beta})d_A^2(x_i)\notag\\&=\sum_{\substack{\alpha\in\N^{s}\\|\alpha|=2}}\Delta_{\alpha}(x^{\beta})d_A^2(x^{\alpha})-\sum_{\substack{\alpha\in\N^{s}\\|\alpha|=2}}\sum_{i=1}^s\binom{\beta-e_i}{\alpha-e_i}\Delta_{e_i}(x^{\beta})d_A^2(x_i)\notag\\&+\sum_{i=1}^s\Delta_{e_i}(x^{\beta})d_A^2(x_i)\notag\\&=\sum_{\substack{\alpha\in\N^{s}\\|\alpha|=2}}\Delta_{\alpha}(x^{\beta})d_A^2(x^{\alpha})+\sum_{i=1}^s\Big(1-\sum_{\substack{\alpha\in\N^{s}\\|\alpha|=2}}\binom{\beta-e_i}{\alpha-e_i}\Big)\Delta_{e_i}(x^{\beta})d_A^2(x_i)\notag\\&=\sum_{\substack{\alpha\in\N^{s}\\|\alpha|=2}}\Delta_{\alpha}(x^{\beta})d_A^2(x^{\alpha})+\sum_{i=1}^s\Big(1-\beta_1-\cdots-(\beta_i-1)-\cdots-\beta_s\Big)\Delta_{e_i}(x^{\beta})d_A^2(x_i)\notag\\&=\sum_{\substack{\alpha\in\N^{s}\\|\alpha|=2}}\Delta_{\alpha}(x^{\beta})d_A^2(x^{\alpha})+\sum_{i=1}^s\Big(2-|\beta|\Big)\Delta_{e_i}(x^{\beta})d_A^2(x_i)\notag\\&=\sum_{\substack{\alpha\in\N^{s}\\|\alpha|=2}}\frac{1}{\alpha!}\frac{\partial^{\alpha}}{\partial x^{\alpha}}(x^{\beta})d_A^2(x^{\alpha})+(2-|\beta|)\sum_{\substack{\alpha\in\N^{s}\\|\alpha|=1}}\frac{1}{\alpha!}\frac{\partial^{\alpha}}{\partial x^{\alpha}}(x^{\beta})d_A^2(x^{\alpha}).\notag
\end{align}
\end{proof}

\begin{rem}\label{pD}
Let $A=\K[x]$ and $n\in \N$. Using Lemma \ref{B2} and Theorem \ref{upd}, we obtain that if $D\in\Der_k^2(A_n,A_n)$ and $x^{\beta}$ is a monomial in $A_n$, then \[D(x^{\beta})=\sum_{\substack{\alpha\in\N^{n+1}\\|\alpha|=2}}F_{\alpha}\frac{1}{\alpha!}\frac{\partial^{\alpha}}{\partial x^{\alpha}}(x^{\beta})+(2-|\beta|)\sum_{\substack{\alpha\in\N^{n+1}\\|\alpha|=1}}F_{\alpha}\frac{1}{\alpha!}\frac{\partial^{\alpha}}{\partial x^{\alpha}}(x^{\beta}),\] where $F_{\alpha}=D(x^{\alpha})\in A_n.$ 
\end{rem}

\begin{lem}\label{der ker}
Consider the notation of Remark \ref{pD}. 
Then, $$\ker\phi_n=\left\{D\in\Der_k^2(A_n,A_n)|\mbox{ }F_{e_j}=0,\mbox{  }\sum_{i=0}^jF_{e_i+e_{j-i}}=0, \textnormal{ for all }j\in\{0,1,\ldots,n\}\right\},$$ where $e_0,e_1,\ldots,e_n$ is the canonical basis of $\N^{n+1}.$
\end{lem}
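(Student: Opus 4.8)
The plan is to use Remark \ref{xmono} to reduce the condition $\phi_n(D)=0$ to a finite computation, and then to read off the two stated families of conditions by comparing coefficients.

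First I would observe that, since $\phi_n(D)=\overline{D}$, we have $D\in\ker\phi_n$ if and only if $\overline{D}$ vanishes on the monomials $x$ and $x^2$. Indeed $A=\K[x]$ is a polynomial ring in one variable and $\overline{D}\in\Der_k^2(A,A_n\otimes_{A_n}B_n)$ is a derivation of order $2$, so by Remark \ref{xmono} it is determined by its values on the monomials $x^{\tau}$ with $1\leq|\tau|\leq 2$, namely $x$ and $x^2$. Hence $\overline{D}=0$ exactly when $\overline{D}(x)=\overline{D}(x^2)=0$. Next I would use the canonical identification $A_n\otimes_{A_n}B_n\cong B_n=A_n[t]/\langle t^{n+1}\rangle$, which is a \emph{free} $A_n$-module with basis $1,t,\ldots,t^n$; under this identification each equation $\overline{D}(x^l)=0$ is equivalent to the vanishing of all $n+1$ coefficients of the powers of $t$, i.e.\ to a system of $n+1$ equations in $A_n$.

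Then I would carry out the two computations directly from the defining formula $\overline{D}(a)=\sum_{i=0}^n D(d_i(a))\otimes t^i$. For $a=x$ we have $d_i(x)=x_i$, so $\overline{D}(x)=\sum_{i=0}^n D(x_i)\,t^i=\sum_{i=0}^n F_{e_i}\,t^i$, and freeness of $B_n$ over $A_n$ gives $\overline{D}(x)=0$ if and only if $F_{e_j}=0$ for all $j\in\{0,\ldots,n\}$. For $a=x^2$ the Hasse--Schmidt Leibniz rule gives $d_i(x^2)=(x^2)^{(i)}=\sum_{p+q=i}x_p x_q$, whence $\overline{D}(x^2)=\sum_{i=0}^n\big(\sum_{p+q=i}D(x_px_q)\big)t^i$. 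The key point is that $x_px_q=x^{e_p+e_q}$ is a degree-two monomial in $A_n$, so by the very definition of the coefficients $F_\alpha$ in Remark \ref{pD} we have $D(x_px_q)=F_{e_p+e_q}$; consequently the coefficient of $t^j$ in $\overline{D}(x^2)$ equals $\sum_{i=0}^{j}F_{e_i+e_{j-i}}$, and $\overline{D}(x^2)=0$ if and only if $\sum_{i=0}^{j}F_{e_i+e_{j-i}}=0$ for all $j\in\{0,\ldots,n\}$.

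Combining the two bullet computations, $D\in\ker\phi_n$ if and only if both $F_{e_j}=0$ and $\sum_{i=0}^{j}F_{e_i+e_{j-i}}=0$ hold for every $j$, which is precisely the asserted description of $\ker\phi_n$. I do not expect a serious obstacle here: the argument is essentially two applications of the Leibniz rules followed by coefficient extraction. The only points requiring care are the initial reduction via Remark \ref{xmono} (so that one checks finitely many monomials rather than all of $A$) and the bookkeeping in the second computation, namely correctly indexing the convolution sum $\sum_{p+q=i}$ over $\N^{n+1}$ and recognizing the identification $D(x_px_q)=F_{e_p+e_q}$ that turns it into $\sum_{i=0}^{j}F_{e_i+e_{j-i}}$.
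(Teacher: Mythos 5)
Your proposal is correct and follows essentially the same route as the paper's proof: reduce to the monomials $x$ and $x^2$ via Remark \ref{xmono}, compute $\overline{D}(x)=\sum_j F_{e_j}\otimes t^j$ and $\overline{D}(x^2)=\sum_j\big(\sum_{i=0}^j F_{e_i+e_{j-i}}\big)\otimes t^j$ using the Hasse--Schmidt Leibniz rule, and extract coefficients of the powers of $t$. The only cosmetic difference is that you identify $D(x_px_q)=F_{e_p+e_q}$ directly from the definition $F_\alpha=D(x^\alpha)$, whereas the paper routes both inclusions through the partial-derivative formula of Remark \ref{pD}; the content is identical.
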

\begin{proof} 
Denote as $\mathcal{A}$ the set on the right in the statement of the lemma.

First, we prove that if $D\in\mathcal{A}$ then $D\in\ker\phi_n.$ By Remark \ref{xmono}, we know that elements of $\Der^2_k(A, A_n\otimes B_n)$ are determined by the monomials $x,x^2.$ Thus, it is enough to prove that $\phi_n(D)(x)=0$ and $\phi_n(D)(x^2)=0$. Note that:\begin{align}
\phi_n(D)(x)&=\sum_{j=0}^n(D(x^{(j)}))\otimes t^j\notag\\
&=\sum_{j=0}^n\sum_{\substack{\alpha\in\N^{n+1}\\|\alpha|=2}}F_{\alpha}\frac{1}{\alpha!}\frac{\partial^{\alpha}}{\partial x^{\alpha}}(x^{(j)})+(2-1)\sum_{\substack{\alpha\in\N^{n+1}\\|\alpha|=1}}F_{\alpha}\frac{1}{\alpha!}\frac{\partial^{\alpha}}{\partial x^{\alpha}}(x^{(j)})\otimes t^j\notag\\
&=\sum_{j=0}^nF_{e_j}\otimes t^j=0.\notag
\end{align}
\begin{align}
\phi_n(D)(x^2)&=\sum_{j=0}^n\Big(D((x^2)^{(j)})\Big)\otimes t^j\notag\\
&=\sum_{j=0}^n\Big(D(\sum_{i=0}^jx^{(i)}x^{(j-i)})\Big)\otimes t^j\notag\\
&=\sum_{j=0}^n\sum_{\substack{\alpha\in\N^{n+1}\\|\alpha|=2}}F_{\alpha}\frac{1}{\alpha!}\frac{\partial^{\alpha}}{\partial x^{\alpha}}(\sum_{i=0}^jx^{(i)}x^{(j-i)})\notag\\&+(2-2)\sum_{\substack{\alpha\in\N^{n+1}\\|\alpha|=1}}F_{\alpha}\frac{1}{\alpha!}\frac{\partial^{\alpha}}{\partial x^{\alpha}}(x^{(j)})(\sum_{i=0}^jx^{(i)}x^{(j-i)})\otimes t^j\notag\\
&=\sum_{j=0}^n\sum_{i=0}^j\sum_{\substack{\alpha\in\N^{n+1}\\|\alpha|=2}}F_{\alpha}\frac{1}{\alpha!}\frac{\partial^{\alpha}}{\partial x^{\alpha}}(x^{(i)}x^{(j-i)})\otimes t^j\notag\\
&=\sum_{j=0}^n\sum_{i=0}^jF_{e_i+e_{j-i}}\otimes t^j=0.\notag
\end{align}
Therefore, $D\in\ker\phi_n.$
Now, we prove the other inclusion. Set $\partial_{x^{\alpha}}:=\frac{\partial^{\alpha}}{\partial x^{\alpha}}$. Let $D\in\ker\phi_n$, note that 
\begin{align}
0&=\overline{D}=\sum_{j=0}^n D\circ d_j\otimes t^j\notag
\end{align}
Then for all $f\in A$ we obtain \begin{align}\label{eqker}0=D\big(d_j(f)\big)\otimes t^j .
\end{align} 
Consider the $A_n$-bilinear map $L:A_n\times B_n \to B_n, (F,G)\mapsto FG.$ The equation (\ref{eqker}) implies that \begin{align}\label{eq2ker}
\sum_{j=0}^nL\Big(D\big(d_j(f)\big),t^j\Big)=0, \mbox{  } \textnormal{for all } f\in A.
\end{align}
By equation (\ref{eq2ker}) we obtain that
\begin{align}
\sum_{j=0}^nD\big(d_j(f)\big)t^j=P \mbox{  } \textnormal{for some }P\in\langle t^{n+1}\rangle.
\end{align}
Hence $D(d_j(f))=0$ for all $j\in\{0,\ldots,n\}.$ In particular if $f=x$ and using Remark \ref{pD} then 
\begin{equation*}
0=\big(\sum_{\substack{\alpha\in\N^{n+1}\\|\alpha|=2}}F_{\alpha}\frac{1}{\alpha!}\frac{\partial^{\alpha}}{\partial x^{\alpha}}(x^{(j)})+(2-1)\sum_{\substack{\alpha\in\N^{n+1}\\|\alpha|=1}}F_{\alpha}\frac{1}{\alpha!}\frac{\partial^{\alpha}}{\partial x^{\alpha}}(x^{(j)})\big)=F_{e_j}
\end{equation*}for all $j\in\{0,1,\ldots,n\}$.  Hence, $$D=\sum_{\substack{\alpha\in\N^{n+1}\\|\alpha|=2}}F_{\alpha}\frac{1}{\alpha!}\partial x^{\alpha}.$$
Now, if $f=x^2$ thus
\begin{align}\notag
0&=\big(\sum_{\substack{\alpha\in\N^{n+1}\\|\alpha|
=2}}F_{\alpha}\frac{1}{\alpha!}\partial_{x^{\alpha}}\big)\big(d_j(x^2)\big)\\\notag
&=\big(\sum_{\substack{\alpha\in\N^{n+1}\\|\alpha|
=2}}F_{\alpha}\frac{1}{\alpha!}\partial_{x^{\alpha}}\big)\big(\sum_{i=0}^jd_i(x)d_{j-i}(x)\big).
\end{align} 
Note that, for $\alpha\in\N^{n+1}$ such that $|\alpha|=2$
\[\frac{1}{\alpha!}\partial_{x^{\alpha}}\big(x^{(i)}x^{(j-i)}\big)=\left\{\begin{array}{ll}0,& \textnormal{if } \alpha\neq e_i+e_{j-i}\\
1,& \textnormal{if } \alpha=e_i+e_{j-i}
\end{array}\right..\]
Hence,
\begin{align}
0&=\big(\sum_{\substack{\alpha\in\N^{n+1}\\|\alpha|
=2}}F_{\alpha}\frac{1}{\alpha!}\partial_{x^{\alpha}}\big)\big(\sum_{i=0}^nd_i(x)d_{j-i}(x)\big)=\sum_{i=0}^jF_{e_i+e_{j-i}}.\notag
\end{align} 
\end{proof}

The following example shows that the map $\phi_{\infty}$ is not injective.
\begin{exam}
Let $A=\K[x]$. Thus, $A_{\infty}=\K[x_0\ldots,x_n,\ldots].$ For each $k\in\N$ define derivations $D_k:A_k\to A_{\infty}$ as follows:
\begin{align}
D_0&=0\notag\\
D_1&=\frac{1}{2!}\frac{\partial^2}{\partial x_1^2}\notag\\
D_2&=D_1-\frac{1}{2}\frac{\partial^2}{\partial x_0x_2}\notag\\
D_k&=D_{k-1}+\frac{\partial^2}{\partial x_0x_k}-\frac{\partial^2}{\partial x_1x_{k-1}}\textnormal{ for }k\geq 3 \notag
\end{align}
We claim that $D=(D_0,D_1,D_2,\ldots)\in\ker\phi_{\infty}$. This is a consequence of the following facts:
\begin{itemize}
\item[(1)] $D=(D_0,D_1,D_2,\ldots)\in\underset{n\in\N}{\varprojlim}\Der_k^m(A_{n},A_{\infty}).$ 
\item[(2)] $\phi_k(D_k)=0$ for all $k\in\N$. 
\end{itemize}

To prove (1), fix $k\in\{0,1,\ldots,n\}$ and let $j< k$. Since monomials in $A_j$ do not have the variable $x_k$ we have $\frac{\partial^2}{\partial x_0x_k}|_{A_j}=0.$ On the other hand, by the Leibniz rule of Hasse-Schmidt derivations we obtain $\frac{\partial^2}{\partial x_1x_{k-1}}|_{A_j}=0.$ Hence, $D_k|_{A_j}=D_j.$ 

Now we prove (2). %
Note that $\phi_0(D_0)=0$ and, by example \ref{notiny}, $\phi_1(D_1)=0$. Using the Lemma \ref{der ker} we prove that $\phi_i(D_i)=0$ for all $i\geq2.$ Observe that $D_k$ is defined by partial derivatives of order 2. Hence,
\[F^k_{e_j}=D_k(x_j)=0.\]
Now we prove $\sum_{i=0}^j F^k_{e_i+e_{j-i}}=0$ for all $k$ and all $j\in\{0,\ldots,k\}$. Firstly, we consider $D_2$. Observe that 
\begin{align}
F^2_{e_0+e_0}&=F^2_{2e_0}=D_2(x_0^2)=0,\notag\\
F^2_{e_0+e_1}+F^2_{e_1+e_0}&=2F^2_{e_0+e_1}=2D_2(x_0x_1)=0,\notag\\
F^2_{e_0+e_2}+F^2_{e_1+e_1}+F^2_{e_2+e_0}&=2F^2_{e_0+e_2}+F^2_{2e_1}=2D_2(x_0x_2)+D_2(x_1^2)=1-1=0.\notag
\end{align}
By Lemma \ref{der ker}, $\phi_2(D_2)=0.$ We continue by induction on $k\geq3.$ Set $\frac{\partial^2}{\partial x_ix_j}=\partial_{x_ix_j}.$ By definition, $D_3=D_2+\partial_{x_0x_3}-\partial_{x_1x_2}.$ Note that:
\begin{align}
F^3_{e_0+e_0}&=F^3_{2e_0}=D_3(x_0^2)=0,\notag\\
F^3_{e_0+e_1}+F^3_{e_1+e_0}&=2F^3_{e_0+e_1}=2D_3(x_0x_1)=0,\notag\\
F^3_{e_0+e_2}+F^3_{e_1+e_1}+F^3_{e_2+e_0}&=2D_3(x_0x_2)+D_3(x_1^2)=1-1=0.\notag\\
F^3_{e_0+e_3}+F^3_{e_1+e_2}+F^3_{e_2+e_1}+F^3_{e_3+e_0}&=2D_3(x_0x_3)+2D_3(x_1x_2)=2-2=0.\notag
\end{align}
Thus, $\phi_3(D_3)=0$. Assume that $D_k\in\ker\phi_k.$ We already know that $D_{k+1}|_{A_j}=D_j,$ for $j\in\{0,\ldots,k\}.$ Using the hypothesis of induction for $j\in\{3,4,\ldots,n\}$ we have that \[\sum_{i=0}^jF^{k+1}_{e_i+e_{j-i}}=0.\] For $j=k+1$ observe that 
\begin{align}
\sum_{i=0}^{k+1}F^{k+1}_{e_i+e_{j-i}}=\sum_{i=0}^{k+1}D_{k+1}(x_ix_{j-i})=2D_{k+1}(x_0x_{k+1})+2D_{k+1}(x_1x_k)=2-2=0.\notag
\end{align}
By Lemma \ref{der ker}, $\phi_{k+1}(D_{k+1})=0$. Therefore, $D_k\in \ker\phi_k$ for all $k\in\N.$ 

\end{exam}

\section{A map between  $\Omega_{A/k}^{(m)}\otimes_{A_n}P_n$ and $\Omega_{A_n/k}^{(m)}$}

In this section we study an implication of Theorem \ref{mT} concerning the module of high-order differentials. 

\begin{defi}\cite[Section 4]{dFDo1}
For any $n\in\N\cup\{\infty\},$ we define $P_n$ to be the $B_n$-module given by
\[P_n:=t^{-n}A_n[t]/tA_n[t]\] when $n$ is finite and \[P_{\infty}:=A_{\infty}((t))/tA_{\infty}[[t]]\] when $n=\infty$.
\end{defi}

The main consecuence that was given in \cite{dFDo1} of Theorem \ref{Der1} is the following formula:

\begin{teo}\cite[Theorem 5.3]{dFDo1} Let $A$ be a $k$-algebra. For all $n\in\N\cup\{\infty\}$ there exist an isomorphism of $A_n$-modules
\[\Omega_{A_n/k}\simeq\Omega_{A/k}\otimes_{A_n}P_n.\]
\end{teo}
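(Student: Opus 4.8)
The plan is to derive the statement by dualizing Theorem \ref{Der1} through the universal property of K\"ahler differentials and invoking the Yoneda lemma, treating finite $n$ first and then passing to the limit for $n=\infty$. The point is that Theorem \ref{Der1} is an isomorphism of functors in the module variable, and K\"ahler differentials are precisely the objects that corepresent the derivation functor; so an isomorphism on the level of $\Der_k$-functors must come from an isomorphism of the corepresenting modules, once we have rewritten the right-hand functor as a $\Hom$ out of a fixed $A_n$-module.

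First I would fix an arbitrary $A_n$-module $M$ and rewrite both sides of Theorem \ref{Der1} by the universal property (Theorem \ref{upd} specialized to $m=1$). On the left this gives $\Der_k(A_n,M)\cong\Hom_{A_n}(\Omega_{A_n/k},M)$, and on the right $\Der_k(A,M\otimes_{A_n}B_n)\cong\Hom_A(\Omega_{A/k},M\otimes_{A_n}B_n)$, where the target carries the $A$-module structure induced by $\gn:A\to B_n$. Thus Theorem \ref{Der1} becomes a natural isomorphism
\[
\Hom_{A_n}(\Omega_{A_n/k},M)\;\cong\;\Hom_A(\Omega_{A/k},M\otimes_{A_n}B_n),
\]
and the whole task reduces to identifying the right-hand functor of $M$ with a $\Hom_{A_n}$ out of a fixed module.

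Next, for finite $n$, I would run a chain of standard adjunctions. Extension of scalars along $\gn$ gives $\Hom_A(\Omega_{A/k},M\otimes_{A_n}B_n)\cong\Hom_{B_n}(\Omega_{A/k}\otimes_A B_n,\,M\otimes_{A_n}B_n)$. The crucial input is the duality $P_n\cong\Hom_{A_n}(B_n,A_n)$: here $B_n$ is free over $A_n$ with basis $1,t,\dots,t^n$ and $P_n$ is free with dual basis $t^0,t^{-1},\dots,t^{-n}$, the pairing being the coefficient of $t^0$ in the product. Because $B_n$ is finite free over $A_n$, this produces a $B_n$-linear isomorphism $M\otimes_{A_n}B_n\cong\Hom_{A_n}(P_n,M)$, with $B_n$ acting on the target through $P_n$. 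Tensor--hom adjunction then yields
\[
\Hom_{B_n}\!\big(\Omega_{A/k}\otimes_A B_n,\Hom_{A_n}(P_n,M)\big)\cong\Hom_{A_n}\!\big((\Omega_{A/k}\otimes_A B_n)\otimes_{B_n}P_n,M\big)\cong\Hom_{A_n}\!\big(\Omega_{A/k}\otimes_{A_n}P_n,M\big).
\]
Since every identification is natural in $M$, Yoneda forces $\Omega_{A_n/k}\cong\Omega_{A/k}\otimes_{A_n}P_n$ as $A_n$-modules, the $A_n$-structure being the one carried by $P_n$.

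Finally, for $n=\infty$ I would pass to the limit, using $\Omega_{A_\infty/k}=\varinjlim_n\Omega_{A_n/k}$ (equation (\ref{Omeinf}) with $m=1$) together with the compatible presentation of $P_\infty$ and the fact that tensor products commute with direct limits, assembling the finite-level isomorphisms into the $n=\infty$ statement. I expect the main obstacle to be twofold: first, checking that the duality $M\otimes_{A_n}B_n\cong\Hom_{A_n}(P_n,M)$ is genuinely compatible with all the $B_n$- and $A_n$-module structures and natural in $M$, so that Yoneda applies cleanly; and second, verifying that the finite-$n$ isomorphisms commute with the transition maps $g_{i,j}$ on the differential side and with the structure maps of $P_\infty$, so that they glue across the limit. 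The failure of $B_\infty$ to be finite free over $A_\infty$ means the duality step cannot be performed directly at level $\infty$ and must instead be recovered as a limit of the finite-level identifications.
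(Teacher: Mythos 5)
Your proposal is correct and follows essentially the same route the paper takes: the chain $\Hom_{A_n}(\Omega_{A_n/k},M)\cong\Der_k(A_n,M)\cong\Der_k(A,M\otimes_{A_n}B_n)\cong\Hom_A(\Omega_{A/k},M\otimes_{A_n}B_n)\cong\Hom_{A_n}(\Omega_{A/k}\otimes_A P_n,M)$ via the duality of Remark \ref{HomPn} and tensor--hom adjunction, followed by Yoneda, is exactly the chain displayed in the proof of Proposition \ref{tensor} (there only a homomorphism for $m>1$, but an isomorphism for $m=1$ by Theorem \ref{Der1}). The paper itself only cites the statement from \cite{dFDo1}, but your argument, including the limit passage for $n=\infty$ where $B_\infty$ fails to be finite free, matches the intended proof.
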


Our next goal is to study a relation between the modules $\Omega_{A_n/k}^{(m)}$ and $\Omega_{A/k}^{(m)}$ using Theorem \ref{mT}.

\begin{lem}\cite[Lemma 4.5]{dFDo1}
For $n\in\N$, the morphism that sends $t^{-j}$ to $t^{-j+n}$ gives an isomorphism of $B_n$-modules between $P_n$ and $B_n$. By contrast, $P_{\infty}$ and $B_{\infty}$ are not isomorphism, not even as $A_{\infty}$-modules.
\end{lem}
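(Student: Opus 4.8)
The plan is to prove the two assertions separately, since they are of a rather different nature. For the finite case, I would produce an explicit $B_n$-isomorphism $P_n \to B_n$ and then check it respects the module structure; for the infinite case, I would exhibit a structural obstruction showing no $A_\infty$-isomorphism can exist.

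For the finite $n$ case, recall that $P_n = t^{-n}A_n[t]/tA_n[t]$. As an $A_n$-module this is free with basis given by the classes of $t^{-n}, t^{-n+1}, \ldots, t^{-1}, t^0$, since any power $t^{-j+n}$ with $-j+n \geq 1$ dies modulo $tA_n[t]$ and powers below $-n$ are not present. First I would define the map $\mu : P_n \to B_n$ on representatives by $t^{-j} \mapsto t^{-j+n}$ for $0 \le j \le n$, extended $A_n$-linearly, and check it is well-defined on the quotient: an element of $t^{-n}A_n[t]$ lies in $tA_n[t]$ exactly when all its coefficients in degrees $-n, \ldots, 0$ vanish, and the shift by $t^n$ carries these to the degrees $0, \ldots, n$ in $B_n = A_n[t]/\langle t^{n+1}\rangle$, with the killed part $tA_n[t]$ mapping into $\langle t^{n+1}\rangle$. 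Then $\mu$ sends the $A_n$-basis $\{t^{-n}, \ldots, t^0\}$ of $P_n$ bijectively to the $A_n$-basis $\{t^0, \ldots, t^n\}$ of $B_n$, so it is an $A_n$-module isomorphism; the $B_n$-linearity is then immediate since multiplication by $t$ on $P_n$ corresponds under $\mu$ to multiplication by $t$ on $B_n$ (the exponent shift is the same), which upgrades the $A_n$-isomorphism to a $B_n$-isomorphism.

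For the claim that $P_\infty$ and $B_\infty$ are not isomorphic even as $A_\infty$-modules, the key point is that the two modules differ qualitatively in how multiplication by $t$ behaves. In $B_\infty = A_\infty[[t]]$ multiplication by $t$ is injective (it has no $t$-torsion and is not surjective onto the units), whereas in $P_\infty = A_\infty((t))/tA_\infty[[t]]$ the element $t^0 = 1$ is annihilated by $t$ (since $t \cdot 1 = t \in tA_\infty[[t]]$ is zero in the quotient), so multiplication by $t$ on $P_\infty$ has a nontrivial kernel. An $A_\infty$-module isomorphism need not a priori respect the $t$-action, so I would instead isolate a more robust invariant: I expect $P_\infty$ is a divisible-type object built from the negative powers $t^{-1}, t^{-2}, \ldots$ while $B_\infty$ is a completion-type object, and these have incompatible structure. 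Concretely, I would compare the submodule lattices or the behaviour of the natural $A_\infty$-valued pairings, arguing that any $A_\infty$-linear bijection would force an identification of distinguished cyclic submodules that leads to a contradiction.

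The main obstacle will be the negative direction. Ruling out a $B_\infty$-isomorphism is easy via the $t$-action, but the statement asserts failure even as $A_\infty$-modules, where one cannot invoke the multiplicative structure directly. The hard part is finding an invariant genuinely detectable by the bare $A_\infty$-module structure that separates the two, rather than an invariant that secretly uses $t$-linearity; I would look for such an invariant in the reference \cite{dFDo1} and, failing a clean module-theoretic discriminator, fall back on a cardinality or generation argument distinguishing $A_\infty((t))/tA_\infty[[t]]$ from $A_\infty[[t]]$.
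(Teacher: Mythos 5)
The paper itself offers no proof of this lemma; it is quoted directly from \cite[Lemma 4.5]{dFDo1}, so there is no in-paper argument to compare against. Judged on its own terms, your treatment of the finite case is correct and complete: $P_n=t^{-n}A_n[t]/tA_n[t]$ is free over $A_n$ on the classes of $t^{-n},\dots,t^{0}$, the shift by $t^{n}$ is well defined because $tA_n[t]$ is carried into $\langle t^{n+1}\rangle$, it takes this basis bijectively to the basis $1,t,\dots,t^{n}$ of $B_n$, and it intertwines multiplication by $t$ on both sides, so the $A_n$-isomorphism is in fact $B_n$-linear.

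The genuine gap is the infinite case, which you acknowledge you have not closed. Your opening observation (multiplication by $t$ is injective on $B_\infty$ but annihilates the class of $1$ in $P_\infty$) only excludes $B_\infty$-isomorphisms, as you yourself note, and the proposed substitutes --- ``submodule lattices'', ``natural pairings'', ``distinguished cyclic submodules'' --- are not arguments. The invariant that actually works is precisely the sum-versus-product distinction you relegate to a fallback, and it is short: since a Laurent series has only finitely many negative-degree terms, $P_\infty=A_\infty((t))/tA_\infty[[t]]\cong\bigoplus_{j\ge 0}A_\infty\,t^{-j}$ is free on a countable basis, in particular countably generated as an $A_\infty$-module; whereas $B_\infty=A_\infty[[t]]\cong\prod_{j\ge 0}A_\infty\,t^{j}$ is not countably generated, because choosing a maximal ideal $\m\subset A_\infty$ and reducing coordinatewise gives a surjection onto $\prod_{j\ge 0}A_\infty/\m$, which a countable generating set would force to be a countable-dimensional $A_\infty/\m$-vector space, while a countably infinite product of copies of a field has uncountable dimension. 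Countable generation is an invariant of the bare $A_\infty$-module structure, so no $A_\infty$-isomorphism exists. You should promote this to the actual proof rather than leave it as a contingency.
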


\begin{rem}\cite[Remark 4.3]{dFDo1}\label{HomPn}
 For every $A_n$-module $M$,  there is a canonical isomorphism\[M\otimes_{A_n}B_n\simeq\Hom_{A_n}(P_n,M)\] as a $B_n$-modules given by \[G=\sum_{j=0}^nm_j\otimes a_jt^{j}\mapsto\big(\theta_G:p=\sum_{j=0}^na'_{-j}t^{-j}\mapsto\sum_{j=0}^na'_{-j}a_jm_j\big).\]
\end{rem}

Using the map of Proposition \ref{der m}, we can describe an explicit map between $\Omega^{(m)}_{A/k}\otimes_A P_n$ and $\Omega^{(m)}_{A_n/k}$.

\begin{pro}\label{tensor} Let $n\in\N\cup\{\infty\}$. Let $A$ be a $k$-algebra. There exists a homomorphism of $A_n$-modules 
\begin{align*}
\phi_n^{\vee}:\Omega^{(m)}_{A/k}\otimes_A P_n \to\Omega_{A_n/k}^{(m)},
\end{align*}
such that $\phi_n^{\vee}(d_A^m(f)\otimes t^{-j})=d_{A_n}^m(f^{(j)}).$

\end{pro}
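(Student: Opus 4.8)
The plan is to realize $\phi_n^{\vee}$ as the Yoneda dual of the natural map $\phi_n$ from Proposition \ref{der m} (and Proposition \ref{der infi} when $n=\infty$), so that no relations of $\Omega^{(m)}_{A_n/k}$ need be checked by hand. First I would record that $\phi_n$ is natural in the $A_n$-module $M$: for an $A_n$-linear map $\psi\colon M\to M'$ one reads off from $\overline{D}(a)=\sum_{i=0}^n D(d_i(a))\otimes t^i$ that $\overline{\psi\circ D}=(\psi\otimes\mathrm{id}_{B_n})\circ\overline{D}$. Thus $\phi_n$ is a natural transformation of functors of $M$ on the category of $A_n$-modules.

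Next I would turn both sides of $\phi_n$ into representable functors. By Theorem \ref{upd} and Lemma \ref{An-A} there are $A_n$-linear identifications, natural in $M$,
\[\Der_k^m(A_n,M)\cong\Hom_{A_n}(\Omega_{A_n/k}^{(m)},M),\qquad \Der_k^m(A,M\otimes_{A_n}B_n)\cong\Hom_A(\Omega_{A/k}^{(m)},M\otimes_{A_n}B_n).\]
I would then feed the canonical $B_n$-linear isomorphism $M\otimes_{A_n}B_n\cong\Hom_{A_n}(P_n,M)$ of Remark \ref{HomPn} into the second identification, and follow it by the tensor-hom adjunction along $A\to A_n$. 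Here I must give $P_n$ the $A$-module structure induced from $B_n$ via $\gamma_n^{\#}$, precisely so that the $A$-action on $\Hom_{A_n}(P_n,M)$ transported through Remark \ref{HomPn} matches the one in the adjunction; with this convention one obtains $\Hom_A(\Omega_{A/k}^{(m)},\Hom_{A_n}(P_n,M))\cong\Hom_{A_n}(\Omega_{A/k}^{(m)}\otimes_A P_n,M)$. All these steps being natural in $M$, the map $\phi_n$ becomes a natural transformation
\[\Hom_{A_n}(\Omega_{A_n/k}^{(m)},-)\longrightarrow\Hom_{A_n}(\Omega_{A/k}^{(m)}\otimes_A P_n,-)\]
of functors on $A_n$-modules.

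By the contravariant Yoneda lemma in $A_n$-modules, such a transformation is represented by a unique $A_n$-homomorphism in the opposite direction, which is exactly the desired $\phi_n^{\vee}\colon\Omega_{A/k}^{(m)}\otimes_A P_n\to\Omega_{A_n/k}^{(m)}$; in particular its $A_n$-linearity and well-definedness come for free, with no generators-and-relations argument. To extract the formula I would run the Yoneda recipe: set $M=\Omega_{A_n/k}^{(m)}$ and push $\mathrm{id}$ through the transformation. The identity corresponds under Theorem \ref{upd} to the universal derivation $d_{A_n}^m$, whose image under $\phi_n$ is $a\mapsto\sum_{i=0}^n d_{A_n}^m(a^{(i)})\otimes t^i$; unwinding Remark \ref{HomPn} this is the homomorphism $P_n\to\Omega_{A_n/k}^{(m)}$ sending $t^{-j}\mapsto d_{A_n}^m(a^{(j)})$, and the adjunction then gives $\phi_n^{\vee}(d_A^m(f)\otimes t^{-j})=d_{A_n}^m(f^{(j)})$.

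The main obstacle is the bookkeeping of module structures: one must verify that every identification above is $A_n$-linear and natural with respect to $A_n$-linear maps $M\to M'$ (so that Yoneda applies over $A_n$ rather than merely over $A$), and in particular that the $A_n$-structure of Lemma \ref{An-A} on $\Der_k^m(A,M\otimes_{A_n}B_n)$ agrees with the one carried through Remark \ref{HomPn} and the adjunction. For $n=\infty$ I would avoid invoking naturality of $\phi_{\infty}$ directly and instead pass to the limit: using $\Omega_{A_\infty/k}^{(m)}=\varinjlim_n\Omega_{A_n/k}^{(m)}$ from equation (\ref{Omeinf}), the fact that $\Omega_{A/k}^{(m)}\otimes_A(-)$ commutes with direct limits, and $P_\infty=\varinjlim_n P_n$, I would set $\phi_\infty^{\vee}=\varinjlim_n\phi_n^{\vee}$, checking compatibility with the transition maps $g_{n,n+1}$ of Remark \ref{Omelim} via the explicit formula $t^{-j}\mapsto d_{A_n}^m(f^{(j)})$.
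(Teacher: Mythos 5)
Your proposal is correct and is essentially the paper's own argument: the paper builds the same chain of identifications (Theorem \ref{upd}, Lemma \ref{An-A}, Remark \ref{HomPn}, tensor--hom adjunction) and obtains $\phi_n^{\vee}$ by applying the resulting composite to the identity at $M=\Omega_{A_n/k}^{(m)}$, which is precisely your Yoneda recipe. The explicit Yoneda/naturality packaging and the direct-limit treatment of $n=\infty$ are just slightly more formal presentations of the same construction.
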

\begin{proof}Putting together Theorem \ref{upd}, Proposition \ref{der m}, Remark \ref{HomPn}, and Lemma \ref{An-A}, we obtain a chain of $A_n$-homomorphisms
\begin{align*}
\Hom_{A_n}(\Omega_{A_n/k}^{(m)},M)\cong\Der_{k}^m(A_n,M)\rightarrow&\Der_{k}^m(A,M\otimes_{A_n} B_n)\\ &\cong\Hom_A(\Omega_{A/k}^{(m)},M\otimes_{A_n} B_n)\\&\cong\Hom_A(\Omega_{A/k}^{(m)},\Hom_{A_n}(P_n,M))\\&\cong\Hom_{A_n}(\Omega_{A/k}^{(m)}\otimes_A P_n,M).
\end{align*}

All these homomorphisms are explicit. 
Applying them starting at the identity of $M=\Omega_{A_n/k}^{(m)}$, we obtain a homomorphism $\phi_n^{\vee}:\Omega_{A/k}^{(m)}\otimes P_n\to \Omega_{A_n/k}^{(m)},$ such that $\phi_n^{\vee}(d_A^m(f)\otimes t^{-j})=d_{A_n}^m(f^{(j)}).$
\end{proof}

\begin{rem}
 For usual derivations, the map $\phi_n^{\vee}$ appears implicitly in the proof of \cite[Lemma 5.1]{dFDo1}. In this case, $\phi^{\vee}_n$ is an isomorphism (see \cite[Theorem 5.3]{dFDo1}). On the contrary, by Theorem \ref{mT} (3), the map $\phi_n$ is not injective in general. In particular, by Theorem \ref{upd}, $\phi^{\vee}_n$ is not surjective in general. In the specific case where A is a polynomial ring, it is worth pointing out that the modules $\Omega_{A/k}^{(m)}\otimes P_n$ and $\Omega_{A_n/k}^{(m)}$ are free modules, but they have different ranks (see \cite[Remark 2.20]{BD}).\end{rem}

The following example shows that the homomorphism $\phi^{\vee}_n$ is not injective in general.

\begin{exam}
Let $A=\frac{\K[x_1,x_2]}{\langle x_1x_2\rangle}.$ Thus, $A_1=\frac{\K[x_1^{(0)},x_1^{(1)},x_2^{(0)},x_2^{(1)}]}{\langle x_1^{(0)}x_2^{(0)}, x_1^{(0)}x_2^{(1)}+x_1^{(1)}x_2^{(0)}\rangle}.$ Let us show that in this case $\phi^{\vee}_1$ is not injective. Set $f=x_1x_2$,  $f_1=x_1^{(0)}x_2^{(0)},$ $f_2=x_1^{(0)}x_2^{(1)}+x_1^{(1)}x_2^{(0)}.$ 

We consider the following presentation of $\Omega_{A/\K}^{(2)}\otimes_{A_1}P_1$ (see \cite[Corollary 2.15]{BD} and use the fact that $P_1\simeq B_1$ as $B_1$-modules):
$$\Omega^{(2)}_{A/\K}\otimes_{A_1}P_1=\frac{\bigoplus_{\substack{\alpha\in\N^{2}\\1\leq|\beta|\leq2}}A_1(d_A^2(x))^{\alpha}\otimes 1\bigoplus_{\substack{\alpha\in\N^{2}\\1\leq|\beta|\leq2}}A_1(d_A^2(x))^{\alpha}\otimes t^{-1}}{\langle f_{\beta}\otimes1, f_{\beta}\otimes t \rangle_{\substack{\beta\in\N^{2}\\0\leq|\beta|\leq1}}},$$ 
where $f_{\beta}=(d_A^{2}(x))^{\beta}d_A^{2}(f)$.

Similarly, consider the following presentation of $\Omega_{A_1/\K}^{(2)}$ (see \cite[Theorem 2.8]{BD1}):
$$\Omega^{(2)}_{A_1/\K}=\frac{\bigoplus_{\substack{\alpha\in\N^{4}\\1\leq|\beta|\leq2}}d_{A_1}^2(\underline{x})^{\alpha}}{\langle F^1_{\beta},F^2_{\beta} \rangle_{\substack{\beta\in\N^{4}\\0\leq|\beta|\leq1}}},$$
where $F^i_{\beta}=d_{A_1}^2(\underline{x})d_{A_1}^2(f_i), i\in\{1,2\}$.

Set $F=2\Big((d_A^2(x_2))^2\otimes x_1^{(0)}x_1^{(1)}\Big)+\frac{1}{2}\Big((d_A^2(x_2))^2\otimes (x_1^{(0)})^2 t^{-1}\Big).$ First we see that $F\neq 0.$ Suppose that $F=0$. This implies 
\begin{align}\label{F=0}F-(\sum_{\substack{\beta\in\N^{2}\\0\leq|\beta|\leq1}}g^0_\beta f_{\beta}\otimes 1+\sum_{\substack{\beta\in\N^{2}\\0\leq|\beta|\leq1}}g^1_\beta f_{\beta}\otimes t^{-1})=0.
\end{align} 
In particular, equation (\ref{F=0}) implies that:
\begin{align} &g_{(1,0)}^1((d_A^2(x_1))^2\otimes t^{-1})=0\notag\\ &(x_2^{(0)}g_{(0,1)}^1+x_1^{(0)}g_{(1,0)}^1)d_A^2(x_1)d_A^2(x_2)\otimes t^{-1}=0\notag\\ &(\frac{x_1^{(0)}}{2}-g_{(0,1)}^1x_1^{(0)})\Big((d_A^2(x_2))^2\otimes t^{-1}\Big)=0\notag
\end{align}
Hence, $g_{(0,1)}^1=0$ and also $g_{(0,1)}^1=\frac{x_1^{(0)}}{2}\neq 0$, a contradiction. Hence $F\neq0.$ 

Now we prove that $\phi_1^{\vee}(F)=0$. Firstly, note that:
\begin{align}
\phi^{\vee}_1(F)&=\phi^{\vee}_1(2((d_A^2(x_2))^2\otimes x_1^{(0)}x_1^{(1)})+\frac{1}{2}((d_A^2(x_2))^2\otimes (x_1^{(0)})^2t^{-1}))\notag\\
&=2x_1^{(0)}x_1^{(1)}\phi^{\vee}_1((d_A^2(x_2))^2\otimes1)+\frac{(x_1^{(0)})^2}{2}\phi^{\vee}_1((d_A^2(x_2))^2\otimes t^{-1})\notag\\
&=2x_1^{(0)}x_1^{(1)}(d_{A_1}^2(x_2^{(0)}))^2+(x_1^{(0)})^2d_{A_1}^2(x_2^{(0)})d_{A_1}^2(x_2^{(1)}).\notag
\end{align}
Set $\beta_1=(0,0,1,0)$ and observe that \begin{align}
F^1_{\beta_1}&=x_2^{(0)}d_{A_1}^2(x_1^{(0)})d_{A_1}^2(x_2^{(0)})+x_1^{(0)}(d_{A_1}^2(x_2^{(0)}))^2,\notag\\
F^2_{\beta_1}&=x_2^{(1)}d_{A_1}^2(x_1^{(0)})d_{A_1}^2(x_2^{(0)})+x_2^{(0)}d_{A_1}^2(x_1^{(1)})(d_{A_1}^2(x_2^{(0)}))^2+x_1^{(1)}d_{A_1}^2(x_2^{(0)})\notag\\&+x_1^{(0)}d_{A_1}^2(x_2^{(0)})d_{A_1}^2(x_2^{(1)}).\notag
\end{align}
Moreover,
\begin{align}x_1^{(1)}F^1_{\beta_1}+x_1^{(0)}F^2_{\beta_1}&=x_1^{(1)}\Big(x_2^{(0)}d_{A_1}^2(x_1^{(0)})d_{A_1}^2(x_2^{(0)})+x_1^{(0)}(d_{A_1}^2(x_2^{(0)}))^2\Big)\notag\\&+x_1^{(0)}\Big(x_2^{(1)}d_{A_1}^2(x_1^{(0)})d_{A_1}^2(x_2^{(0)})+x_2^{(0)}d_{A_1}^2(x_1^{(1)})d_{A_1}^2(x_2^{(0)})\notag\\&+x_1^{(1)}(d_{A_1}^2(x_2^{(0)}))^2+x_1^{(0)}d_{A_1}^2(x_2^{(0)})d_{A_1}^2(x_2^{(1)})\Big)\notag\\ &=\Big(x_1^{(1)}x_2^{(0)}+x_1^{(0)}x_2^{(1)}\Big)d_{A_1}^2(x_1^{(0)})d_{A_1}^2(x_2^{(0)})\notag\\ &+2x_1^{(0)}x_1^{(1)}(d_{A_1}^2(x_2^{(0)}))^2+\Big(x_1^{(0)}x_2^{(0)}\Big)d_{A_1}^2(x_1^{(1)})d_{A_1}^2(x_2^{(0)})\notag\\&+(x_1^{(0)})^2d_{A_1}^2(x_2^{(0)})d_{A_1}^2(x_2^{(1)})\notag\\&=2x_1^{(0)}x_1^{(1)}(d_{A_1}^2(x_2^{(0)}))^2+(x_1^{(0)})^2d_{A_1}^2(x_2^{(0)})d_{A_1}^2(x_2^{(1)}).\notag\end{align}
Therefore,
\begin{align}\phi_1^{\vee}(F)&=2x_1^{(0)}x_1^{(1)}(d_{A_1}^2(x_2^{(0)}))^2+(x_1^{(0)})^2d_{A_1}^2(x_2^{(0)})d_{A_1}^2(x_2^{(1)})\notag\\&=x_1^{(1)}F^1_{\beta_1}+x_1^{(0)}F^2_{\beta_1}=0.\notag\end{align}
\end{exam}

\section{A formula for the universal Hasse-Schmidt derivation and usual partial derivatives}

In the course of this investigation, we ran into a formula that relates Hasse-Schmidt derivations with usual partial derivatives. This relation seems to be well-known for derivatives of order 1 (see \cite[Section 5]{dFDo1} or \cite[Lemma 2.9]{BD} for an elementary proof of the following formula and see \cite[Proposition 2.3.11]{LNN} and \cite[Corollary 2.3.12 and 2.3.13]{LNN} for additional applications):

\begin{align}\label{fp}
\frac{\partial f^{(k)}}{\partial x^{(j)}_i}=d_{k-j}\left(\frac{\partial f}{\partial x_i}\right)\textnormal{ for all }0\leq j\leq k\leq n.
\end{align}

In this final section we provide a similar formula that relates the Hasse-Schmidt derivations with high-order partial derivatives.

For $\alpha=(\alpha^0_1,\ldots,\alpha^n_1,\ldots,\alpha^0_s,\ldots,\alpha^n_s)\in\N^{s(n+1)},$ set $$\hat{\alpha}:=(\sum_{j=0}^n\alpha_1^j,\sum_{j=0}^n\alpha_2^j,\ldots,\sum_{j=0}^n\alpha_s^j)\in\N^s.$$ For $i\in\{1,\ldots,s\}$, we define $\lambda_i:=\sum_{j=0}^nj\alpha_i^j$. Denote $\lambda_{\alpha}:=\sum_{i=1}^s\lambda_i$.
\begin{pro}
Let $l,n\in\N$. Given $\alpha\in\N^{s(n+1)}$, such that $0\leq\lambda_{\alpha}\leq l\leq n$. The the following identities hold as functions over the polynomial ring $\K[x_1,\ldots,x_s]$:\[\frac{\partial^{\alpha}}{\partial{x^{\alpha}}}\circ d_l=d_{l-\lambda_{\alpha}}\circ \frac{\partial^{\hat{\alpha}}}{\partial{x^{\hat{\alpha}}}}.\]
\end{pro}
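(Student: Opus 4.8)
The plan is to prove the identity by induction on the total order $N := |\alpha| = \sum_{i=1}^{s}\sum_{j=0}^{n}\alpha_i^{j}$ of the operator $\frac{\partial^{\alpha}}{\partial x^{\alpha}}$, using formula (\ref{fp}) as the elementary building block together with the fact that partial derivatives on the polynomial rings $A=\K[x_1,\ldots,x_s]$ and $A_n=\K[x_i^{(j)}]$ commute among themselves. Throughout I would fix $n$ and $l$ and view both sides as $\K$-linear maps $A\to A_n$; since everything is additive, it suffices to establish the operator identity, and no evaluation beyond what is already packaged in (\ref{fp}) is needed.

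For the base case $N=0$ one has $\alpha=0$, hence $\hat\alpha=0$ and $\lambda_\alpha=0$, so the claim reduces to $d_l=d_l$; equivalently, the case $N=1$ is exactly (\ref{fp}), with a single $\alpha_i^j=1$, $\hat\alpha=e_i$ and $\lambda_\alpha=j$. For the inductive step I would take $|\alpha|=N+1$ with $\lambda_\alpha\le l\le n$, choose an index pair $(i,j)$ with $\alpha_i^j\ge 1$, and write $\alpha=\alpha'+\varepsilon$, where $\varepsilon$ is the standard basis vector of $\N^{s(n+1)}$ in position $(i,j)$, so that $|\alpha'|=N$ and, by commutativity of partial derivatives,
\[\frac{\partial^{\alpha}}{\partial x^{\alpha}}\circ d_l=\frac{\partial}{\partial x_i^{(j)}}\circ\Big(\frac{\partial^{\alpha'}}{\partial x^{\alpha'}}\circ d_l\Big).\]
The decisive bookkeeping is that removing the factor $\partial/\partial x_i^{(j)}$ lowers the weight by exactly $j$ and the total count in coordinate $i$ by exactly one: $\lambda_\alpha=\lambda_{\alpha'}+j$ and $\hat\alpha=\widehat{\alpha'}+e_i$, where $e_i$ is the $i$-th standard vector of $\N^{s}$. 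In particular $\lambda_{\alpha'}=\lambda_\alpha-j\le\lambda_\alpha\le l$, so the induction hypothesis applies to $\alpha'$ and gives $\frac{\partial^{\alpha'}}{\partial x^{\alpha'}}\circ d_l=d_{l-\lambda_{\alpha'}}\circ\frac{\partial^{\widehat{\alpha'}}}{\partial x^{\widehat{\alpha'}}}$.

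Substituting and applying (\ref{fp}) to the composite $\frac{\partial}{\partial x_i^{(j)}}\circ d_{l-\lambda_{\alpha'}}$ — which is legitimate precisely because $0\le j\le l-\lambda_{\alpha'}$, a consequence of $\lambda_\alpha=\lambda_{\alpha'}+j\le l$ — would yield
\[\frac{\partial^{\alpha}}{\partial x^{\alpha}}\circ d_l=\frac{\partial}{\partial x_i^{(j)}}\circ d_{l-\lambda_{\alpha'}}\circ\frac{\partial^{\widehat{\alpha'}}}{\partial x^{\widehat{\alpha'}}}=d_{(l-\lambda_{\alpha'})-j}\circ\frac{\partial}{\partial x_i}\circ\frac{\partial^{\widehat{\alpha'}}}{\partial x^{\widehat{\alpha'}}}.\]
Since $(l-\lambda_{\alpha'})-j=l-\lambda_\alpha$ and $\frac{\partial}{\partial x_i}\circ\frac{\partial^{\widehat{\alpha'}}}{\partial x^{\widehat{\alpha'}}}=\frac{\partial^{\widehat{\alpha'}+e_i}}{\partial x^{\widehat{\alpha'}+e_i}}=\frac{\partial^{\hat\alpha}}{\partial x^{\hat\alpha}}$, the right-hand side is exactly $d_{l-\lambda_\alpha}\circ\frac{\partial^{\hat\alpha}}{\partial x^{\hat\alpha}}$, closing the induction.

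The computation is mechanical once (\ref{fp}) is available, so the main obstacle is essentially notational rather than conceptual: one must verify the two identities $\lambda_\alpha=\lambda_{\alpha'}+j$ and $\hat\alpha=\widehat{\alpha'}+e_i$ for the peeled-off variable, confirm the weight constraint $j\le l-\lambda_{\alpha'}$ so that (\ref{fp}) can be invoked with a nonnegative Hasse--Schmidt index, and check that the intermediate indices $l-\lambda_{\alpha'}$ and $l-\lambda_\alpha$ remain in $\{0,\ldots,n\}$, all of which follow from the hypothesis $0\le\lambda_\alpha\le l\le n$.
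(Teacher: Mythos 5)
Your proof is correct and follows essentially the same route as the paper: both arguments amount to iterating the order-one identity (\ref{fp}), the paper via an explicit nested computation (first powers of a fixed $x_i^{(j)}$, then over $j$, then over $i$) and you via a single induction on $|\alpha|$ peeling off one factor $\partial/\partial x_i^{(j)}$ at a time. Your bookkeeping of the constraints ($\lambda_\alpha=\lambda_{\alpha'}+j\le l$ guaranteeing that (\ref{fp}) applies with a nonnegative index) is accurate, and if anything your inductive organization is cleaner than the paper's telescoping.
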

\begin{proof}
Denote $\partial_{x^{\alpha}}=\frac{\partial^{\alpha}}{\partial_{x^{\alpha}}}.$ By the identity (\ref{fp}),
\begin{equation}\label{fpar}
\partial_{x^{(j)}_i}\circ d_k=d_{k-j}\circ \partial_{x_i},\mbox{ }0\leq j\leq k\leq n.
\end{equation} By definition, $\partial_{(x^{(j)}_i)^{\alpha_i^j}}\circ d_l=\partial_{x^{(j)}_i}\big(\partial_{x^{(j)}_i}\ldots\big(\partial_{x^{(j)}_i}\circ d_l\big)\big)$. Equation (\ref{fpar}) implies
\begin{align}\notag
\partial_{(x^{(j)}_i)^{\alpha_i^j}}\circ d_l&=\partial_{x^{(j)}_i}\big(\partial_{x^{(j)}_i}\ldots\partial_{x^{(j)}_i}\big(\partial_{x^{(j)}_i}\circ d_l\big)\big)\\\notag
&=\partial_{x^{(j)}_i}\big(\partial_{x^{(j)}_i}\ldots\partial_{x^{(j)}_i}\big(d_{l-j}(\partial_{x_i})\big)\big)\notag\\
&=\partial_{x^{(j)}_i}\big(\partial_{x^{(j)}_i}\ldots\partial_{x^{(j)}_i}\big(d_{l-2j}(\partial_{x_i}\circ\partial_{x_i})\big)\big).\notag
\end{align}
Iterating this process we obtain
\begin{align}\notag
\partial_{(x^{(j)}_i)^{\alpha_i^j}}\circ d_l&=d_{l-\alpha_i^j\cdot j}\circ\big(\partial_{x_i}\big(\partial_{x_i}\cdots(\partial_{x_i})\big)\big)\\
&=d_{l-\alpha_i^j\cdot j}\circ \partial_{x_i^{\alpha_i^j}}.\label{eq6}
\end{align}
Fix $i\in\{1\ldots,s\}$. Then, equation (\ref{eq6}) gives  
\begin{align}
\partial_{(x^{(0)}_i)^{\alpha_i^0}}\circ\cdots\circ\partial_{(x^{(n)}_i)^{\alpha_i^n}}\circ d_l&=\partial_{(x^{(0)}_i)^{\alpha_i^0}}\circ\cdots\circ\partial_{(x^{(n-1)}_i)^{\alpha_i^{n-1}}}\circ d_{l-\alpha_i^n\cdot n}\circ \partial_{x_i^{\alpha_i^n}}\notag\\
&=\partial_{(x^{(0)}_i)^{\alpha_i^0}}\circ\cdots\circ d_{l-(\alpha_i^nn)-(\alpha_i^{n-1}(n-1))}\circ\partial_{x^{\alpha_i^{n-1}}_i}\circ \partial_{x_i^{\alpha_i^n}}.\notag
\end{align}
Repeating this process results in
\begin{align}\notag
\partial_{(x^{(0)}_i)^{\alpha_i^j}}\circ\cdots\circ\partial_{(x^{(n)}_i)^{\alpha_i^j}}\circ d_l&= d_{l-\sum_{j=0}^n\alpha_i^j\cdot j}\circ \partial_{x_i^{\sum_{j=0}^n\alpha_i^j}}\\\notag
&=d_{l-\lambda_ i}\circ \partial_{x_i^{\sum_{j=0}^n\alpha_i^j}}.
\end{align}
The previous computations give place to the following identity. For $\alpha=(\alpha^0_1,\ldots,\alpha^n_1,\ldots,\alpha^0_s,\ldots,\alpha^n_s)\in\N^{s(n+1)}$ we have
\begin{align}\notag
\partial_{x^{\alpha}}\circ d_l&=\partial_{(x^{(0)}_1)^{\alpha_1^0}}\circ\cdots\circ\partial_{(x^{(n)}_s)^{\alpha_s^n}}\circ d_l\\\notag
&=\partial_{(x^{(0)}_1)^{\alpha_1^0}}\circ\cdots\circ\partial_{(x^{(n)}_{s-1})^{\alpha_{s-1}^{n}}}\circ d_{l-\lambda_ s}\circ \partial_{x_s^{\sum_{j=0}^n\alpha_s^j}}\\
&=\partial_{(x^{(0)}_1)^{\alpha_1^0}}\circ\cdots\circ\partial_{(x^{(n)}_{s-2})^{\alpha_{s-2}^{n}}}\circ d_{l-\lambda_ s-\lambda_{s-1}}\circ \partial_{x_{s-1}^{\sum_{j=0}^n\alpha_{s-1}^j}}\circ \partial_{x_s^{\sum_{j=0}^n\alpha_s^j}}.\notag
\end{align}
Applying repeatedly this step we conclude
\begin{align}\notag
\partial_{x^{\alpha}}\circ d_l&=d_{l-\sum_{i=1}^s\lambda_ i}\circ \partial_{x_1^{\sum_{j=0}^n\alpha_1^j}}\circ\cdots\circ\partial_{x_s^{\sum_{j=0}^n\alpha_s^j}}\\\notag
&=d_{l-\lambda}\circ \partial_{x^{\hat{\alpha}}}.
\end{align}
\end{proof}

\nocite{*}
\bibliographystyle{acm}
\bibliography{sample}

 \addcontentsline{toc}{section}{\numberline{References}}
\vspace{.5cm}
\noindent{\footnotesize \textsc {Paul Barajas, Universidad Aut\'onoma de Zacatecas, Paseo a La Bufa entronque Solidaridad s/n, CP 98000, Zacatecas, Mexico} \\
36178329@uaz.edu.mx, paulvbg@gmail.com}\\

\end{document}